\newtheorem{defn}{Definition}[section]
\newtheorem{them}[defn]{Theorem}
\newtheorem{lem}[defn]{Lemma}
\newtheorem{prop}[defn]{Proposition}
\newtheorem{exam}[defn]{Example}
\newtheorem{con}[defn]{Conjecture}
\newtheorem{prob}[defn]{Question}
\newtheorem{rem}[defn]{Remark}
\numberwithin{equation}{section}
\begin{document}
\title{Some results of strongly primitive tensors\footnote{L. You's research is supported by the National Natural Science Foundation of China
(Grant No.11571123) and the Guangdong Provincial Natural Science Foundation (Grant No.2015A030313377),
P. Yuan's research is supported by the NSF of China (Grant No. 11671153).}}

\author{ Lihua You\footnote{{\it{Corresponding author:\;}}ylhua@scnu.edu.cn.} 
 \qquad Yafei Chen\footnote{{\it{Email address:\;}}764250280@qq.com. } 
 \qquad Pingzhi Yuan\footnote{{\it{Corresponding author:\;}}yuanpz@scnu.edu.cn. }
 }\vskip.2cm
\date{{\small
School of Mathematical Sciences, South China Normal University,\\
Guangzhou, 510631, P.R. China\\
}}
\maketitle
\noindent {\bf Abstract } In this paper, we show that an order $m$ dimension 2 tensor is primitive if and only if its majorization matrix is primitive,  and then we obtain   the characterization of order $m$ dimension 2 strongly primitive tensors and   the bound of the strongly primitive degree. Furthermore, we study the properties of strongly primitive tensors with $n\geq 3$, and propose some problems for further research.

{\it \noindent {\bf Keywords:}}    Tensors; Strongly primitive; Primitive; Characterization; Strongly primitive degree

{\it \noindent {\bf AMS Classification: } 15A69}

\section{Introduction}

\hskip.6cm A  nonnegative square matrix $A= (a_{ij})$ of order $n$ is nonnegative primitive (or simply, primitive) if $A^k>0$ for
some positive integer $k$. The least such $k$ is called the primitive exponent (or simply, exponent) of $A$ and is
denoted by $\exp(A)$.

Since the work of Qi \cite{2005Q} and Lim \cite{2005L}, the study  of tensors which regarded as the generalization of matrices,
the spectra of tensors (and hypergraphs) and their various applications has attracted much attention and interest.

As is in  \cite{2005Q}, an order $m$ dimension $n$ tensor $\mathbb{A}= (a_{i_1i_2\ldots i_m})_{1\le i_j\le n \hskip.2cm (j=1, \ldots, m)}$
over the complex field $\mathbb{C}$ is a multidimensional array with all entries
$$a_{i_1i_2\ldots i_m}\in\mathbb{C}\, ( i_1, \ldots, i_m\in [n]=\{1, \ldots, n\}).$$
In \cite{2008C} and \cite{2011C}, Chang et al investigated the properties of the spectra of nonnegative tensors,
defined the irreducibility of tensors and the primitivity of nonnegative tensors (as Definition \ref{defn11}),
and extended many important properties of primitive matrices to primitive tensors.

\begin{defn} {\rm (See \cite{2011C})} \label{defn11} Let $\mathbb{A}$ be a nonnegative  tensor with order $m$ and dimension $n$,
$x=(x_1, x_2, \ldots, x_n)^T\in\mathbb{R}^n$ a vector and $x^{[r]}=(x_1^r, x_2^r, \ldots, x_n^r)^T$.
Define the map $T_\mathbb{A}$ from $\mathbb{R}^n$ to $\mathbb{R}^n$ as: $T_\mathbb{A}(x)=(\mathbb{A}x)^{[\frac{1}{m-1}]}$.
If there exists some positive integer $r$ such that $T_\mathbb{A}^r(x)>0$ for all nonnegative nonzero vectors $x\in\mathbb{R}^n$,
then $\mathbb{A}$ is called primitive and the smallest such integer $r$ is called the primitive degree of $\mathbb{A}$,
denoted by $\gamma(\mathbb{A})$. \end{defn}

Recently, Shao \cite{2013S} defined the general product of two n-dimensional tensors as follows.
\begin{defn}{\rm (See \cite{2013S})} \label{defn12}
Let $\mathbb{A}$ {\rm (}and $\mathbb{B}${\rm)} be an order $m\ge2$ {\rm (}and $k\ge 1${\rm)}, dimension $n$ tensor, respectively.
Define the general product  $\mathbb{A}\mathbb{B}$ to be the following tensor $\mathbb{D}$ of order $(m-1)(k-1)+1$ and dimension $n$:
$$ d_{i\alpha_1\ldots\alpha_{m-1}}=\sum\limits_{i_2, \ldots, i_m=1}^na_{ii_2\ldots i_m}b_{i_2\alpha_1}\ldots b_{i_m\alpha_{m-1}}
\quad (i\in[n], \, \alpha_1, \ldots, \alpha_{m-1}\in[n]^{k-1}).$$
\end{defn}

The tensor product  is a generalization of the usual matrix product, and satisfies  a very useful property:
the associative law (\cite{2013S}, Theorem 1.1).
With the general product,  when $k=1$ and $\mathbb{B}=x=(x_1,\ldots, x_n)^T\in \mathbb{C}^n$ is a vector of dimension $n$,
then $\mathbb{A}\mathbb{B} = \mathbb{A}x$ is still a vector of dimension $n$, and for any $i\in [n],$
$(\mathbb{A}\mathbb{B})_i=(\mathbb{A}x)_i=\sum\limits_{i_2, \ldots, i_m=1}^na_{ii_2\ldots i_m}x_{i_2}\ldots x_{i_m}.$

As an application of the general tensor product defined by Shao \cite{2013S},
Shao presented a simple characterization of the primitive tensors.
Now we  give the definition of ``essentially positive" which introduced by Pearson.

\begin{defn} {\rm (See \cite{2010P}, Definition 3.1)} \label{defn13}A nonnegative tensor $\mathbb{A}$ is  called essentially positive,
if for any nonnegative nonzero vector $x\in \mathbb{R}^n, \mathbb{A}x>0$ holds.
\end{defn}

\begin{prop}{\rm (See \cite{2013S}, Proposition 4.1)} \label{pro14}
Let $\mathbb{A}$ be an order $m$ and dimension $n$ nonnegative  tensor. Then the following three conditions are equivalent:

{\rm (1) } For any $i, j\in[n], a_{ij\ldots j}>0$ holds.

{\rm (2) } For any $j\in[n],  \mathbb{A}e_j>0$ holds {\rm(}where $e_j$ is the $j$-{th} column of the identity matrix $I_n${\rm)}.

{\rm (3) } For any nonnegative nonzero vector $x\in \mathbb{R}^n, \mathbb{A}x>0$ holds. \end{prop}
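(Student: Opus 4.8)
The plan is to prove Proposition 1.4 by establishing the cycle of implications $(1) \Rightarrow (3) \Rightarrow (2) \Rightarrow (1)$, which closes the loop and yields the three-way equivalence. The key computational tool is the explicit formula for $(\mathbb{A}x)_i = \sum_{i_2,\ldots,i_m=1}^n a_{i i_2 \ldots i_m} x_{i_2}\cdots x_{i_m}$ recorded just before the statement, together with the observation that $e_j$ has a single nonzero entry equal to $1$ in position $j$.

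For the implication $(1) \Rightarrow (3)$, I would take an arbitrary nonnegative nonzero vector $x = (x_1,\ldots,x_n)^T$ and fix an index $i \in [n]$. Since $x \neq 0$, there is some $j \in [n]$ with $x_j > 0$. In the sum defining $(\mathbb{A}x)_i$, every term is nonnegative because $\mathbb{A}$ is nonnegative and $x \geq 0$, so it suffices to exhibit one strictly positive term. I would isolate the single term indexed by $i_2 = \cdots = i_m = j$, namely $a_{i j \ldots j}\, x_j^{m-1}$; by hypothesis $a_{ij\ldots j} > 0$ and $x_j > 0$, so this term is strictly positive, forcing $(\mathbb{A}x)_i > 0$. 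As $i$ was arbitrary, $\mathbb{A}x > 0$, giving essential positivity.

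The implication $(3) \Rightarrow (2)$ is immediate: each $e_j$ is itself a nonnegative nonzero vector, so condition $(3)$ applied to $x = e_j$ yields $\mathbb{A}e_j > 0$. For $(2) \Rightarrow (1)$, I would compute $\mathbb{A}e_j$ componentwise. Because $(e_j)_k = \delta_{jk}$, the product $x_{i_2}\cdots x_{i_m}$ in the defining sum vanishes unless $i_2 = \cdots = i_m = j$, so the entire sum collapses to the single surviving term $(\mathbb{A}e_j)_i = a_{i j \ldots j}$. Thus $\mathbb{A}e_j > 0$ says exactly that $a_{ij\ldots j} > 0$ for every $i \in [n]$, and letting $j$ range over $[n]$ recovers condition $(1)$ in full.

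I do not anticipate a serious obstacle here, since all three implications reduce to careful bookkeeping with the product formula; the only point requiring mild attention is the collapse of the multi-index sum to a single term in the steps involving $e_j$, where one must verify that every multi-index other than $(j,\ldots,j)$ contributes a factor of the form $(e_j)_k = 0$. The argument uses nonnegativity of $\mathbb{A}$ crucially in $(1) \Rightarrow (3)$ to discard the remaining terms as harmless rather than potentially cancelling.
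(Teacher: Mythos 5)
Your proof is correct and is exactly the standard argument for this result; the paper itself only cites Proposition \ref{pro14} from Shao's work without reproducing a proof, and your cycle $(1)\Rightarrow(3)\Rightarrow(2)\Rightarrow(1)$, with the collapse of the sum to the single term $a_{ij\ldots j}$ when $x=e_j$, is precisely how the equivalence is established in the cited source. No gaps.
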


By Proposition \ref{pro14}, the following Definition \ref{defn15} is equivalent to Definition \ref{defn13}.
What's more, in Proposition \ref{pro16}, Shao showed a characterization of primitive tensors and defined the primitive degree by using
the properties of tensor product and the zero patterns which defined by Shao in \cite{2013S}.

\begin{defn} {\rm (See \cite{2013S}, Definition 4.1)} \label{defn15}
A nonnegative tensor $\mathbb{A}$ is  called essentially positive, if it satisfies one of the three conditions in Proposition \ref{pro14}.\end{defn}

\begin{prop}\label{pro16}{\rm (See \cite{2013S}, Theorem 4.1)}
A nonnegative tensor $\mathbb{A}$ is primitive if and only if there exists some positive integer $r$ such that $\mathbb{A}^r$ is essentially positive. Furthermore, the smallest such $r$ is the primitive degree of $\mathbb{A}$, $\gamma(\mathbb{A})$.
\end{prop}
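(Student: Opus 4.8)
The plan is to reduce the whole statement to a single auxiliary fact: for every nonnegative vector $x$ and every integer $r\geq 1$, the vectors $T_{\mathbb{A}}^r(x)$ and $\mathbb{A}^r x$ have exactly the same positivity pattern, i.e. $\{i : (T_{\mathbb{A}}^r(x))_i > 0\} = \{i : (\mathbb{A}^r x)_i > 0\}$. Granting this, the proposition follows at once. By Proposition \ref{pro14} (applied to the order $(m-1)^r+1$ tensor $\mathbb{A}^r$), the tensor $\mathbb{A}^r$ is essentially positive precisely when $\mathbb{A}^r x > 0$ for every nonnegative nonzero $x$; by the auxiliary fact this holds exactly when $T_{\mathbb{A}}^r(x) > 0$ for every such $x$, which by Definition \ref{defn11} is exactly the defining condition for the exponent $r$ to witness primitivity. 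Thus the set of integers $r$ for which $\mathbb{A}^r$ is essentially positive coincides with the set of $r$ for which $T_{\mathbb{A}}^r$ is everywhere positive, so these sets have the same minimum, giving $\gamma(\mathbb{A}) = \min\{r : \mathbb{A}^r \text{ is essentially positive}\}$.

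I would establish the auxiliary fact by induction on $r$. For the base case $r=1$, note $(T_{\mathbb{A}}(x))_i = ((\mathbb{A}x)_i)^{1/(m-1)}$, and since $(\mathbb{A}x)_i \geq 0$, taking a positive real root leaves positivity unchanged, so $(T_{\mathbb{A}}(x))_i > 0$ iff $(\mathbb{A}x)_i > 0$. For the inductive step, put $u = T_{\mathbb{A}}^{r-1}(x)$ and $w = \mathbb{A}^{r-1}x$; both are nonnegative vectors, and by the induction hypothesis they have the same support. Then $T_{\mathbb{A}}^r(x) = T_{\mathbb{A}}(u)$, so $(T_{\mathbb{A}}^r(x))_i > 0$ iff $(\mathbb{A}u)_i = \sum_{i_2,\ldots,i_m} a_{ii_2\ldots i_m}\, u_{i_2}\cdots u_{i_m} > 0$; since all summands are nonnegative, this holds iff there exist $i_2,\ldots,i_m$ with $a_{ii_2\ldots i_m} > 0$ and $u_{i_2},\ldots,u_{i_m} > 0$. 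On the other side, associativity of the general product (\cite{2013S}, Theorem 1.1) gives $\mathbb{A}^r x = \mathbb{A}(\mathbb{A}^{r-1}x) = \mathbb{A}w$, so $(\mathbb{A}^r x)_i > 0$ iff there exist $i_2,\ldots,i_m$ with $a_{ii_2\ldots i_m} > 0$ and $w_{i_2},\ldots,w_{i_m} > 0$. Because $u$ and $w$ share the same support, these two existence conditions are literally the same, which completes the induction.

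The only delicate point is this inductive step, and its crux is twofold. First, $\mathbb{A}^r x$ must be peeled off as $\mathbb{A}(\mathbb{A}^{r-1}x)$, which is exactly where associativity of the general tensor product is invoked, and which also ensures $\mathbb{A}^{r-1}x$ is a genuine dimension-$n$ vector so that the two sides can be compared coordinatewise. Second, one must observe that both the nonlinear map $T_{\mathbb{A}}$ and plain multiplication by $\mathbb{A}$ decide the positivity of the $i$-th output through the \emph{same} combinatorial condition, namely the existence of a nonzero entry $a_{ii_2\ldots i_m}$ all of whose argument slots receive positive inputs. The entrywise $(m-1)$-th root built into $T_{\mathbb{A}}$ rescales magnitudes but never alters which coordinates are positive, so it is invisible at the level of support. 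This is precisely what forces primitivity, phrased through the nonlinear iteration $T_{\mathbb{A}}^r$, to coincide with essential positivity of the multilinear power $\mathbb{A}^r$, and no quantitative estimate on the sizes of the entries is ever needed.
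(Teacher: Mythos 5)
Your argument is correct: the support-preservation lemma (that $T_{\mathbb{A}}^r(x)$ and $\mathbb{A}^r x$ have the same positivity pattern, proved by induction using associativity of the general product and the fact that the entrywise $(m-1)$-th root does not change supports) does yield the equivalence and the equality of the minimal exponents via Proposition \ref{pro14}. Note that the paper itself gives no proof of this proposition --- it is quoted from Shao \cite{2013S} --- and your zero-pattern argument is essentially the one Shao uses there, so there is nothing to flag.
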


The concept of the majorization matrix of a tensor introduced by Pearson is very useful.

\begin{defn}\label{defn17}{ \rm (See \cite{2010P},  Definition 2.1)}
 The majorization matrix  $M(\mathbb{A})$ of the tensor $\mathbb{A}$ is defined as  $(M(\mathbb{A}))_{ij}=
a_{ij\ldots j}, i, j\in[n]$.
\end{defn}

By  Definition \ref{defn15}, Proposition \ref{pro16} and  Definition \ref{defn17},
 the following characterization of the primitive tensors was easily obtained.

\begin{prop}\label{pro18}{\rm (See \cite{2014Y}, Remark 2.6)}
Let $\mathbb{A}$ be a nonnegative  tensor with order $m$ and dimension $n$.
Then $\mathbb{A}$ is primitive if and only if there exists some positive integer $r$ such that $M(\mathbb{A}^r)>0.$
Furthermore, the smallest such $r$ is the primitive degree of $\mathbb{A}$, $\gamma(\mathbb{A})$.
\end{prop}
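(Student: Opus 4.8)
The plan is to reduce the statement to Proposition \ref{pro16} by showing that, for any nonnegative tensor $\mathbb{B}$ of dimension $n$, positivity of the majorization matrix $M(\mathbb{B})$ is exactly the same condition as $\mathbb{B}$ being essentially positive. Indeed, by Definition \ref{defn17} we have $(M(\mathbb{B}))_{ij} = b_{ij\ldots j}$ for all $i,j \in [n]$, so $M(\mathbb{B}) > 0$ means precisely that $b_{ij\ldots j} > 0$ for all $i,j \in [n]$. This is verbatim condition (1) of Proposition \ref{pro14}, which by Definition \ref{defn15} is one of the defining conditions of essential positivity. Hence $M(\mathbb{B}) > 0$ if and only if $\mathbb{B}$ is essentially positive.

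Next I would apply this equivalence with $\mathbb{B} = \mathbb{A}^r$. The point to verify here is only bookkeeping about orders: by repeated use of Definition \ref{defn12} (together with the associative law), the power $\mathbb{A}^r$ has order $(m-1)^r + 1$ but still dimension $n$, so its majorization matrix is a well-defined $n \times n$ matrix and the index pattern $ij\ldots j$ (with the trailing block of $j$'s having the length appropriate to the order of $\mathbb{A}^r$) is exactly the one recorded by $M(\mathbb{A}^r)$. Consequently, for each fixed $r$, we have $M(\mathbb{A}^r) > 0$ if and only if $\mathbb{A}^r$ is essentially positive.

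Finally I would combine this with Proposition \ref{pro16}. That proposition says $\mathbb{A}$ is primitive if and only if $\mathbb{A}^r$ is essentially positive for some positive integer $r$; substituting the equivalence from the previous step gives that $\mathbb{A}$ is primitive if and only if $M(\mathbb{A}^r) > 0$ for some $r$. Because the equivalence ``$\mathbb{A}^r$ essentially positive $\Longleftrightarrow M(\mathbb{A}^r) > 0$'' holds separately for each $r$, the least $r$ achieving one condition is the least $r$ achieving the other; since Proposition \ref{pro16} identifies the former least value as $\gamma(\mathbb{A})$, the same value serves for $M(\mathbb{A}^r) > 0$, which proves the ``furthermore'' claim.

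The main (and essentially only) obstacle is the middle step: one must be confident that the definition of the majorization matrix continues to capture the correct entries once the order of the tensor grows under taking powers, i.e.\ that $(M(\mathbb{A}^r))_{ij}$ really is the diagonal-type entry $(\mathbb{A}^r)_{ij\ldots j}$ needed to match condition (1) of Proposition \ref{pro14}. Everything else is a direct substitution of definitions, so no genuine computation is required.
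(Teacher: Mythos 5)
Your proposal is correct and follows exactly the route the paper indicates: it notes that $M(\mathbb{A}^r)>0$ is, by Definition \ref{defn17}, a restatement of condition (1) of Proposition \ref{pro14}, hence of essential positivity of $\mathbb{A}^r$ via Definition \ref{defn15}, and then invokes Proposition \ref{pro16}. The paper gives no further argument beyond this same combination of definitions, so there is nothing to add.
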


On the primitive degree $\gamma(\mathbb{A})$, Shao proposed  the following conjecture for further research.%

\begin{con} \label{con19}{\rm (See \cite{2013S},  Conjecture 1)}
When $m$ is fixed, then there exists some polynomial $f(n)$ on $n$ such that $\gamma(\mathbb{A})\le f(n)$ for
all nonnegative primitive tensors of order $m$ and dimension $n$. \end{con}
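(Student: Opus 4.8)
The plan is to convert the analytic primitivity condition into a purely combinatorial dynamics problem on subsets of $[n]$, and then to bound the number of steps needed. By Proposition \ref{pro18}, $\mathbb{A}$ is primitive with $\gamma(\mathbb{A})=r$ exactly when $r$ is the least integer with $M(\mathbb{A}^r)>0$, i.e. with $\mathbb{A}^r e_j>0$ for every $j\in[n]$. Since $\mathbb{A}$ is nonnegative no cancellation occurs, so the support of $\mathbb{A} v$ depends only on the support of the nonnegative vector $v$. Using the associativity of the general product, so that $\mathbb{A}^r e_j=\mathbb{A}(\mathbb{A}^{r-1}e_j)$, I would introduce the monotone \emph{support operator} $F\colon 2^{[n]}\to 2^{[n]}$,
$$F(S)=\{\,i\in[n]\ :\ a_{ii_2\ldots i_m}>0 \text{ for some } i_2,\ldots,i_m\in S\,\},$$
and prove by induction that $\operatorname{supp}(\mathbb{A}^r e_j)=F^r(\{j\})$ (the base case $F(\{j\})=\{i:a_{ij\ldots j}>0\}$ is the $j$-th column support of $M(\mathbb{A})$). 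Then $\gamma(\mathbb{A})$ is precisely the least $r$ with $F^r(\{j\})=[n]$ for all $j\in[n]$, and the conjecture becomes the assertion that this quantity is bounded by a polynomial in $n$ when $m$ is fixed.

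Next I would record the structural features of $F$: it is monotone ($S\subseteq T\Rightarrow F(S)\subseteq F(T)$), and for $m=2$ it is additive, $F(S\cup T)=F(S)\cup F(T)$, whereas for $m\ge3$ this additivity may fail. When $m=2$ the definition uses a single lower index, so $F$ is exactly the out-neighbour map of the digraph on $[n]$ in which $j\to i$ is an arc whenever $a_{ij}>0$; in this case primitivity of $\mathbb{A}$ coincides with primitivity of the matrix $M(\mathbb{A})$ and the classical Wielandt bound gives $\gamma(\mathbb{A})\le(n-1)^2+1$, a polynomial. This is the base case and the motivation for the general strategy.

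For $m\ge3$ the plan is to separate the dynamics into a reachability part and an aperiodicity part. For reachability, I would pass to the inflationary monotone map $G(S)=S\cup F(S)$: the iterates $G^r(\{j\})$ are nondecreasing and hence stabilize after at most $n-1$ steps, and primitivity forces the stable value to be all of $[n]$, since otherwise $F^r(\{j\})\subseteq G^r(\{j\})\subsetneq[n]$ for every $r$, contradicting $M(\mathbb{A}^r)>0$. The genuinely hard part is to control the gap between the $G$-dynamics and the true $F$-dynamics, i.e. to rule out long transients in which $F^r(\{j\})$ repeatedly gains and loses elements before settling on $[n]$. Here I would attempt to adapt the gcd-of-cycle-lengths argument underlying Wielandt's theorem, exploiting that monotonicity means enlarging the source set never shrinks its image, and seek a monotone potential that must increase at least once in every window of bounded length until the orbit reaches $[n]$.

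The main obstacle is precisely the failure of additivity for $m\ge3$: because a positive entry $a_{ii_2\ldots i_m}$ may require lower indices drawn from several parts of $S$ at once, $F$ is a genuinely nonlinear Boolean operator on a state space of size $2^n$, and the naive bound on the transient length is exponential. Closing the gap from $2^n$ down to a polynomial in $n$ — that is, showing that the growth cannot stall for super-polynomially many steps — is the crux of the conjecture and the step I expect to resist any routine argument. An alternative route I would explore is to realize the $F$-dynamics as the out-neighbour dynamics of an auxiliary primitive matrix of size polynomial in $n$, after which the matrix Wielandt bound would finish the proof; the difficulty there is again that the loss of additivity obstructs any naive such linearization.
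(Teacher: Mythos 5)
Your reduction is sound as far as it goes: by Proposition \ref{pro18} and the associativity of the general product, $\gamma(\mathbb{A})$ is indeed the least $r$ with $F^r(\{j\})=[n]$ for every $j\in[n]$, and the case $m=2$ does follow from Wielandt's bound. But the proposal stops exactly where the conjecture begins. For $m\ge 3$ you candidly state that controlling the transient of the nonlinear Boolean operator $F$ on the $2^n$-element state space is the step you ``expect to resist any routine argument,'' and you offer no mechanism for closing it; what you have written is therefore a correct reformulation of Conjecture \ref{con19}, not a proof of it. Note also that this statement appears in the paper only as Shao's conjecture: the paper does not prove it, but records its resolution as Theorem \ref{thm110} of \cite{2014Y}, which gives the explicit polynomial $f(n)=(n-1)^2+1$.

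The concrete idea you are missing is that $F$ always dominates a \emph{linear} sub-dynamics. Since $a_{ij\ldots j}>0$ with $j\in S$ already witnesses $i\in F(S)$, one has $F(S)\supseteq\{\,i:(M(\mathbb{A}))_{ij}>0 \text{ for some } j\in S\,\}$ for every $S$, so $F^r(\{j\})$ contains the $r$-step out-neighbourhood of $j$ in the digraph of the majorization matrix $M(\mathbb{A})$. Combined with the structural facts of Proposition \ref{prop25} --- every column of $M(\mathbb{A})$ has a positive off-diagonal entry, and some column has two positive entries --- this is what lets \cite{2014Y} run a Wielandt-type growth argument along walks in that $n$-vertex digraph, never needing to control the full orbit in $2^{[n]}$; the super-additivity of $F$ that you flag as the obstacle (namely that $F(S\cup T)$ may strictly contain $F(S)\cup F(T)$) only helps, since it can only enlarge the sets. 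Be aware, though, that $M(\mathbb{A})$ need not be primitive when $\mathbb{A}$ is (Example \ref{exam33}; compare Lemma \ref{lem31} and Theorem \ref{thm32}), so the linear sub-dynamics alone does not finish the job either; the actual proof in \cite{2014Y} has to interleave the two effects, and that interleaving is precisely the substance your proposal would still need to supply.
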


In the case of $m = 2$ ($\mathbb{A}$ is a matrix), the well-known Wielandt's upper bound tells us that we can take
$f(n) =(n-1)^2 + 1.$
Recently,  the authors \cite{2014Y} confirmed Conjecture \ref{con19} by proving Theorem \ref{thm110}.

\begin{them}\label{thm110}{\rm (See \cite{2014Y}, Theorem 1.2)}
Let $\mathbb{A}$ be a nonnegative primitive tensor with order $m$ and dimension $n$.
Then its primitive degree $\gamma(\mathbb{A})\le (n-1)^2+1$, and the upper bound is tight.
\end{them}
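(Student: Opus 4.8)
The plan is to translate primitivity into the combinatorial dynamics of the supports of the vectors $\mathbb{A}^{r}e_{j}$, and then to compare those dynamics with the action of a genuine nonnegative matrix so that the classical Wielandt bound can be invoked. First I would reformulate Proposition \ref{pro18} in terms of supports: for a nonnegative nonzero $x$ with support $S=\{j:x_{j}>0\}$, Definition \ref{defn12} shows that $(\mathbb{A}x)_{i}>0$ if and only if $a_{ii_{2}\ldots i_{m}}>0$ for some $i_{2},\ldots,i_{m}\in S$, so the support of $\mathbb{A}x$ depends only on $S$; call it $F(S)$. Using the associative law and the fact that $T_{\mathbb{A}}$ preserves supports, an easy induction gives $\mathrm{supp}(\mathbb{A}^{r}e_{j})=F^{r}(\{j\})$, whence by Proposition \ref{pro18} the tensor $\mathbb{A}$ is primitive exactly when $F^{r}(\{j\})=[n]$ for all $j$ and some $r$, and $\gamma(\mathbb{A})$ is the least such $r$. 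The map $F$ is monotone with $F(\emptyset)=\emptyset$ and $F([n])=[n]$.

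The decisive comparison is with the majorization matrix. Restricting each tuple to $i_{2}=\cdots=i_{m}=k$ and discarding the remaining nonnegative terms yields $F(S)\supseteq\mathrm{supp}(M(\mathbb{A})\chi_{S})$, where $\chi_{S}$ is the characteristic vector of $S$; iterating and using monotonicity of $F$ gives $F^{r}(\{j\})\supseteq\mathrm{supp}(M(\mathbb{A})^{r}e_{j})$ for every $r$. Hence if the nonnegative matrix $M(\mathbb{A})$ is primitive, then $M(\mathbb{A})^{r}>0$ for $r\ge\exp(M(\mathbb{A}))$, so $F^{r}(\{j\})=[n]$ for all $j$ and $\gamma(\mathbb{A})\le\exp(M(\mathbb{A}))\le(n-1)^{2}+1$ by Wielandt's theorem. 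This settles the case in which the majorization matrix is itself primitive.

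The main obstacle is that $\mathbb{A}$ may be primitive while $M(\mathbb{A})$ is not: when $|S|\ge 2$ the mixed tuples $a_{ii_{2}\ldots i_{m}}$ with the indices $i_{l}$ not all equal contribute extra elements to $F(S)$, so $F$ can synchronize even when the digraph of $M(\mathbb{A})$ fails to be strongly connected. Here I would argue directly on the dynamics of $F$. Since enlarging the nonzero pattern of $\mathbb{A}$ enlarges $F$ pointwise and therefore cannot increase $\gamma(\mathbb{A})$, it suffices to bound $\gamma$ on minimal primitive patterns; on these I would run a Wielandt-type analysis of the support-spreading map, locating a shortest return $i\in F^{s}(\{i\})$ with $s\le n$ and combining it with the reachability forced by primitivity to recover the bound $(n-1)^{2}+1$. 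Controlling the contribution of the mixed tuples so that this combinatorial estimate is never beaten is the crux of the argument.

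Finally, for tightness I would exhibit the Wielandt tensor. Let $W$ be the $n\times n$ Wielandt matrix, whose exponent equals $(n-1)^{2}+1$, and define $\mathbb{A}$ by $a_{ij\ldots j}=W_{ij}$ for all $i,j\in[n]$ and $a_{ii_{2}\ldots i_{m}}=0$ whenever $i_{2},\ldots,i_{m}$ are not all equal. For this diagonal tensor there are no mixed contributions, so $F(S)=\mathrm{supp}(W\chi_{S})$ exactly and the support dynamics coincide with the Boolean action of $W$; therefore $\gamma(\mathbb{A})=\exp(W)=(n-1)^{2}+1$, showing that the bound is attained.
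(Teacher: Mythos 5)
Your reduction to support dynamics is sound: the map $F$ is well defined, $\mathrm{supp}(\mathbb{A}^{r}e_{j})=F^{r}(\{j\})$ follows from associativity, the inclusion $F^{r}(\{j\})\supseteq\mathrm{supp}(M(\mathbb{A})^{r}e_{j})$ is correct, and your tightness example is exactly the tensor $\mathbb{A}_{0}$ of Section 2 (zero on all mixed tuples, majorization matrix equal to the Wielandt matrix $M_{1}$), for which $\gamma(\mathbb{A}_{0})=(n-1)^{2}+1$ as the paper records. The case where $M(\mathbb{A})$ is primitive is also correctly disposed of; that is precisely Lemma \ref{lem31} (Shao's Corollary 4.1) plus Wielandt. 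Note, however, that the present paper does not prove Theorem \ref{thm110} at all: it quotes it from \cite{2014Y}, so your proposal must stand on its own.

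It does not. The entire difficulty of the theorem lies in the case you defer to ``a Wielandt-type analysis of the support-spreading map,'' and what you write there is a plan, not an argument. Two concrete obstructions: (i) for a singleton, $F(\{i\})=\mathrm{supp}(M(\mathbb{A})e_{i})$ exactly, so the early dynamics are governed by the digraph of $M(\mathbb{A})$, which need not be strongly connected even when $\mathbb{A}$ is primitive --- Example \ref{exam33} of this paper is exactly such a tensor --- so the classical ``shortest cycle of length $s\le n$ inside a strongly connected digraph'' step of Wielandt's proof has no direct analogue; (ii) once $|S|\ge 2$ the mixed tuples enter, and a priori the monotone map $F$ on $2^{[n]}$ could take far more than $(n-1)^{2}+1$ steps to stabilize at $[n]$; bounding this by a polynomial in $n$ is precisely Shao's Conjecture \ref{con19}, resolved in \cite{2014Y} by a delicate argument built on Proposition \ref{prop25} (every column of $M(\mathbb{A})$ has an off-diagonal nonzero entry, and some column has two nonzero entries) together with a careful growth analysis of the sets $F^{k}(\{j\})$. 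Your sentence ``controlling the contribution of the mixed tuples \ldots is the crux of the argument'' is an accurate self-diagnosis: that crux is missing, and the reduction to ``minimal primitive patterns'' does not supply it, since such patterns are not classified. As written, the proposal proves the bound only under the additional hypothesis that $M(\mathbb{A})$ is primitive, plus the tightness claim.
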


 They also showed that there are no gaps  in the tensor case   in \cite{2015Y},
 which implies that the result of the case $m\geq 3$ is totally different from the case $m=2$.
In \cite{2013S}, Shao also proposed the concept of strongly primitive  for further research.

\begin{defn}\label{defn111}{\rm (See \cite{2013S}, Definition 4.3)}
Let $\mathbb{A}$ be a nonnegative tensor with order $m$ and dimension $n$. If there exists some positive
integer $k$ such that $\mathbb{A}^k > 0$ is a positive tensor, then $\mathbb{A}$ is called strongly primitive,
and the smallest such $k$ is called the strongly primitive degree of $\mathbb{A}$.
\end{defn}

Let $\mathbb{A}=(a_{i_1i_2\ldots i_m})$ be a nonnegative tensor with order $m$ and dimension $n$.
It is clear that if $\mathbb{A}$ is strongly primitive, then $\mathbb{A}$ is primitive.
For convenience,  let $\eta(\mathbb{A})$ be the strongly primitive degree of $\mathbb{A}$.
Clearly, $\gamma(\mathbb{A})\leq \eta(\mathbb{A})$.
In fact, it is obvious that in the matrix case $(m = 2)$,
 a nonnegative matrix $A$ is primitive if and only if $A$ is strongly primitive, and $\gamma(A)=\eta(A)=\exp(A)$.
But in the case $m\geq 3$, Shao gave an example to show  that these two concepts are not equivalent in the case $m\geq 3$.
In \cite{2015Y}, the authors proposed the following question.

\begin{prob}\label{prob112}{\rm(\cite{2015Y}, Question 4.18)}
Can we define and  study the strongly primitive degree, the strongly primitive degree set,
the $j$-strongly primitive of strongly primitive tensors and so on?
\end{prob}

Based on Question \ref{prob112}, we  study  primitive tensors and  strongly primitive tensors in this paper,
show that an order $m$ dimension 2 tensor is primitive if and only if its majorization matrix is primitive,
and  obtain   the characterization of order $m$ dimension 2 strongly primitive tensors and   the bound of the strongly primitive degree. Furthermore, we study the properties of strongly primitive tensors with $n\geq 3$, and propose some problems for further research.

\section{Preliminaries}

\hskip.6cm In \cite{2015Y}, the authors obtained the following Proposition \ref{prop21}
and gave  Example \ref{exam23} by computing the strongly primitive degree.

\begin{prop}\label{prop21}{\rm(\cite{2015Y}, Proposition 4.16)}
Let $\mathbb{A}=(a_{i_1i_2\ldots i_m})$ be a nonnegative strongly primitive tensor with order $m$ and dimension $n$.
Then for any $\alpha\in [n]^{m-1}$, there exists some $i\in [n]$ such that $a_{i\alpha}>0$.
\end{prop}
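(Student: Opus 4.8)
The plan is to prove the contrapositive: if there is some $\alpha^\ast\in[n]^{m-1}$ for which $a_{i\alpha^\ast}=0$ for every $i\in[n]$, then $\mathbb{A}$ cannot be strongly primitive, i.e.\ $\mathbb{A}^k$ fails to be a positive tensor for every $k\ge 1$. The underlying idea is that such a ``dead'' index pattern $\alpha^\ast$ propagates through every power of $\mathbb{A}$ and forces a zero entry to persist. Since all entries are nonnegative, a sum of products of entries is positive only if at least one of its summands is a product of strictly positive factors; consequently a single factor that is identically zero across the sum kills an entire coordinate.

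The key computational input is the expansion $\mathbb{A}^k=\mathbb{A}\cdot\mathbb{A}^{k-1}$, valid by the associative law (Shao, Theorem~1.1). With $\mathbb{A}$ of order $m$ and $\mathbb{A}^{k-1}$ of order $(m-1)^{k-1}+1$, Definition~\ref{defn12} gives
$$(\mathbb{A}^k)_{i\alpha_1\ldots\alpha_{m-1}}=\sum_{i_2,\ldots,i_m=1}^n a_{ii_2\ldots i_m}\,(\mathbb{A}^{k-1})_{i_2\alpha_1}\cdots(\mathbb{A}^{k-1})_{i_m\alpha_{m-1}},$$
where each block $\alpha_j$ ranges over $[n]^{(m-1)^{k-1}}$. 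The crucial feature is that in every factor $(\mathbb{A}^{k-1})_{i_j\alpha_{j-1}}$ the \emph{first} index $i_j$ is a summation variable, so a zero entry of $\mathbb{A}^{k-1}$ is exploitable only if it vanishes for all choices of that first index.

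This dictates the right inductive claim: for every $k\ge 1$ there is a block $\gamma\in[n]^{(m-1)^k}$ with $(\mathbb{A}^k)_{i\gamma}=0$ for \emph{all} $i\in[n]$. The base case $k=1$ is exactly the hypothesis, with $\gamma=\alpha^\ast$. For the step, given such a $\gamma'$ for $\mathbb{A}^{k-1}$, set $\alpha_1=\gamma'$ in the displayed formula; then every summand contains the factor $(\mathbb{A}^{k-1})_{i_2\gamma'}=0$, so the coordinate vanishes independently of $i$ and of the trailing blocks $\alpha_2,\ldots,\alpha_{m-1}$. Taking $\gamma=(\gamma',\alpha_2,\ldots,\alpha_{m-1})$ for any fixed choice of those blocks closes the induction. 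Hence $\mathbb{A}^k$ has a zero entry for every $k$, is never positive, and $\mathbb{A}$ is not strongly primitive, as desired.

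I expect the only genuine subtlety to be the strengthening of the hypothesis from ``$\mathbb{A}^{k-1}$ has a zero entry'' to ``$\mathbb{A}^{k-1}$ has a coordinate vanishing for every value of its first index.'' The weaker form does not propagate, precisely because the first index of each inner factor is summed out; recognizing that the dead pattern must be lodged in the first $\alpha$-block is what makes the argument go through. The bookkeeping of orders and block lengths, $(m-1)^{k-1}\to(m-1)^k$, is routine and can be suppressed.
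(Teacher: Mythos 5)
Your proof is correct. The paper itself states Proposition~\ref{prop21} without proof (it is imported from \cite{2015Y}), but your zero-propagation induction is exactly the technique this paper uses for its own analogous results (Proposition~\ref{prop29}, Theorem~\ref{thm37}, Theorem~\ref{thm42}), and you correctly isolate the one point that matters: only a coordinate of $\mathbb{A}^{k-1}$ that vanishes for \emph{every} value of its first index can be planted as a block $\alpha_j$ in (\ref{eq21}) so as to annihilate every summand, which is precisely what the hypothesis $a_{i\alpha^\ast}=0$ for all $i$ supplies at the base case.
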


Let $k(\geq 0)$, $n(\geq 2)$, $q(\geq 0)$, $r(\geq 1)$ be integers and $k=(n-1)q+r$ with $1\leq r\leq n-1$  when $k\geq 1$.
In \cite{2015H, 2014Y, 2015Y}, the authors defined some nonnegative tensors with order $m$ and dimension $n$ as follows:
 $$
 \mathbb{A}_k=(a^{[k]}_{i_1i_2\ldots i_m})_{1\leq i_j\leq n \hskip.2cm (j=1,\ldots, m)},$$
\noindent where

{\rm(1) } $M(\mathbb{A}_k)
=M_1=\left(
                 \begin{array}{cccccc}
                    0 & 0 & \ldots & 0 & 1 & 1 \\
                    1 & 0 & \ldots & 0 & 0 & 0 \\
                    0 & 1 & \ldots & 0 & 0 & 0\\
                    \vdots &\vdots & \ddots & \vdots & \vdots &\vdots \\
                    0 & 0 & \ldots & 1 & 0 & 0 \\
                    0 & 0 & \ldots & 0 & 1 & 0 \\
                 \end{array}
               \right).$\vskip.1cm

{\rm (2) } $a^{[0]}_{ii_2\ldots i_m}=0$, if $i_2\ldots i_m\not=i_2\ldots i_2$ for any $i\in[n]$.\vskip.1cm

{\rm (3) }$a^{[k]}_{i\alpha}=1$, if $i\in[n]\backslash\{r-q, r-q+1, \ldots, r,r+1\} \pmod n$ and $\alpha=i_2\ldots i_m\in[n]^{m-1}$ with $\{i_2,\ldots,i_m\}=\{r-q-1,r\} \pmod n;$

{\rm (4) }$a^{[k]}_{i_1i_2\ldots i_m}=0$, except for {\rm (1)} and {\rm (3)}.
\vskip0.2cm
The authors showed the tensors $\mathbb{A}_k$ $(k\geq 0)$ are primitive,  the  primitive degree
$\gamma(\mathbb{A}_0)=(n-1)^2+1$(\cite{2014Y}) and  $\gamma(\mathbb{A}_k)=k+n$ (\cite{2015Y}, Theorem 3.3) for $1\leq k\leq n^2-3n+2$.

\begin{rem}\label{rem22}
It is clear that for any $\mathbb{A}_k$ $(0\leq k\leq n^2-3n+2)$, there exists some $\alpha\in [n]^{m-1}$,
for any $i\in [n]$,  $a^{[k]}_{i\alpha}=0$. Thus for each $0\leq k\leq n^2-3n+2$,
$\mathbb{A}_k$  is not a strongly primitive tensor by Proposition \ref{prop21}.
\end{rem}

\begin{exam}\label{exam23}{\rm(\cite{2015Y}, Example 4.17)}
Let $m=n=3$, $\mathbb{A}=(a_{i_1i_2i_3})$ be a nonnegative tensor with order $m$ and dimension $n$,
where $a_{111}=a_{222}=a_{333}=a_{233}=a_{311}=0$ and other $a_{i_1i_2i_3}=1$. Then $\eta(\mathbb{A})=4.$
\end{exam}

\begin{rem}\label{rem24}
In fact, we can obtain $\gamma(\mathbb{A})=\eta(\mathbb{A})=4$ because of $a_{233333333}^{(3)}=0$,
where $\mathbb{A}^3=(a_{i_1i_2\ldots i_9}^{(3)})$.
\end{rem}

In the computation of Example \ref{exam23}, we note that the following equation is useful and will be used repeatedly.
It can be easy to obtain by  the general product of two n-dimensional tensors which defined in Definition \ref{defn12} in \cite{2013S}.

Let $\mathbb{A}$ be a nonnegative primitive tensor with order $m$ and dimension $n$,
$\alpha_{2}, \ldots, \alpha_{m}\in [n]^{(m-1)^{k-1}}.$  Then we have
  \begin{equation}\label{eq21}
(\mathbb{A}^{k})_{i\alpha_{2}\ldots\alpha_{m}}
=\sum\limits_{i_{2},i_{3},\ldots, i_{m}=1}^{n} a_{ii_{2}i_{3}\ldots i_{m}}(\mathbb{A}^{k-1})_{i_{2}\alpha_{2}}\ldots
(\mathbb{A}^{k-1})_{i_{m}\alpha_{m}}.
\end{equation}

\begin{prop}\label{prop25}{\rm (See \cite {2014Y}, Proposition 2.7)}
 Let $\mathbb{A}$ be a nonnegative primitive tensor with order $m$ and dimension $n$,
 $M(\mathbb{A})$ be the majorization matrix of $\mathbb{A}$. Then we have:

 {\rm (1) } For each $j\in [n]$, there exists an integer $i\in [n]\backslash \{j\}$ such that $(M(\mathbb{A}))_{ij}>0$.

 {\rm (2) } There exist some $j\in [n]$ and integers $u,v$ with $1\leq u<v\leq n$ such that $(M(\mathbb{A}))_{uj}>0$ and $(M(\mathbb{A}))_{vj}>0$.
\end{prop}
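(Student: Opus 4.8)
The plan is to reduce everything to the majorization matrices of the powers of $\mathbb{A}$ and then invoke the characterization of primitivity in Proposition \ref{pro18}, namely that $M(\mathbb{A}^r)>0$ for some $r$. First I would record the specialization of the product rule \eqref{eq21} obtained by taking every $\alpha_s$ to be the constant string $j\cdots j$: since $(\mathbb{A}^{k-1})_{i_s\,j\cdots j}=M(\mathbb{A}^{k-1})_{i_sj}$, this yields the recursion
\[
M(\mathbb{A}^k)_{ij}=\sum_{i_2,\ldots,i_m=1}^{n} a_{ii_2\ldots i_m}\,M(\mathbb{A}^{k-1})_{i_2 j}\cdots M(\mathbb{A}^{k-1})_{i_m j}
\]
for all $i,j\in[n]$ and $k\ge 2$. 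The crucial observation driving both parts is that a column of $M(\mathbb{A}^{k-1})$ with small support severely restricts which terms survive: the product $M(\mathbb{A}^{k-1})_{i_2 j}\cdots M(\mathbb{A}^{k-1})_{i_m j}$ is nonzero only if every $i_s$ lies in the support of the $j$-th column of $M(\mathbb{A}^{k-1})$.

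For part (1) I argue by contradiction. Suppose that for some $j$ the $j$-th column of $M(\mathbb{A})$ has no off-diagonal nonzero entry, i.e. its support is contained in $\{j\}$. I claim, by induction on $k$, that the $j$-th column of $M(\mathbb{A}^k)$ is supported in $\{j\}$ for every $k$. Granting this for $k-1$, the recursion forces the only possibly surviving term to be $i_2=\cdots=i_m=j$, so $M(\mathbb{A}^k)_{ij}=M(\mathbb{A})_{ij}\bigl(M(\mathbb{A}^{k-1})_{jj}\bigr)^{m-1}$, which vanishes for $i\ne j$ because $M(\mathbb{A})_{ij}=0$ for $i\ne j$ by hypothesis. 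Hence $M(\mathbb{A}^k)$ has a zero off-diagonal entry in column $j$ for all $k$, so (since $n\ge 2$) $M(\mathbb{A}^k)$ is never positive, contradicting Proposition \ref{pro18}.

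For part (2) I again argue by contradiction, assuming every column of $M(\mathbb{A})$ has at most one nonzero entry, and show this property is inherited by all powers. Assume inductively that every column of $M(\mathbb{A}^{k-1})$ has at most one nonzero entry, and fix $j$. If the $j$-th column of $M(\mathbb{A}^{k-1})$ is zero, the recursion makes the whole $j$-th column of $M(\mathbb{A}^k)$ zero; if its unique nonzero entry sits in row $\ell$, the recursion collapses to $M(\mathbb{A}^k)_{ij}=M(\mathbb{A})_{i\ell}\bigl(M(\mathbb{A}^{k-1})_{\ell j}\bigr)^{m-1}$, so the $j$-th column of $M(\mathbb{A}^k)$ is a nonnegative multiple of the $\ell$-th column of $M(\mathbb{A})$ and hence again has at most one nonzero entry. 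Thus every $M(\mathbb{A}^k)$ has at most one nonzero entry per column and can never be positive when $n\ge 2$, contradicting Proposition \ref{pro18}; therefore some column of $M(\mathbb{A})$ must carry two nonzero entries, giving the required $j$ and $u<v$.

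These arguments are short once the right invariant is identified, so I do not anticipate a serious obstacle; the one delicate point is the inductive collapse of the sum in \eqref{eq21}, where one must verify that a single-point (or empty) column support really does annihilate every term except the diagonal-type one with $i_2=\cdots=i_m=\ell$. An equivalent and perhaps more transparent route avoids the powers altogether: for a vector $x=te_j$ supported on a single index one computes that $\mathbb{A}x$ equals $t^{m-1}$ times the $j$-th column of $M(\mathbb{A})$, so $T_{\mathbb{A}}$ carries single-point supports to single-point supports exactly as prescribed by the columns of $M(\mathbb{A})$. Iterating $T_{\mathbb{A}}$ on $e_j$ then keeps the support a single point (or empty) for all time under the hypothesis of either part, again contradicting primitivity in the sense of Definition \ref{defn11}.
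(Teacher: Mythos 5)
Your argument is correct. Note that the paper itself supplies no proof of Proposition \ref{prop25}: it is quoted verbatim from \cite{2014Y} (Proposition 2.7 there), so there is nothing internal to compare against; your write-up is in effect a self-contained reconstruction. The key identity you use, $M(\mathbb{A}^k)_{ij}=\sum_{i_2,\ldots,i_m} a_{ii_2\ldots i_m}M(\mathbb{A}^{k-1})_{i_2j}\cdots M(\mathbb{A}^{k-1})_{i_mj}$, does follow from \eqref{eq21} by taking each $\alpha_s=j\cdots j\in[n]^{(m-1)^{k-1}}$, and the two column-support invariants you propagate (support contained in $\{j\}$ for part (1); at most one nonzero entry per column for part (2)) are genuinely preserved under this recursion, since a product $M(\mathbb{A}^{k-1})_{i_2j}\cdots M(\mathbb{A}^{k-1})_{i_mj}$ survives only when every $i_s$ lies in the column's support. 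Either invariant forces a zero entry in $M(\mathbb{A}^r)$ for every $r$ (using the standing assumption $n\ge 2$), contradicting Proposition \ref{pro18}. Your alternative route via $T_{\mathbb{A}}(te_j)=t\,\bigl(M(\mathbb{A})e_j\bigr)^{[\frac{1}{m-1}]}$ is the same invariant phrased dynamically against Definition \ref{defn11}, and is arguably the cleanest way to see both parts at once; it is also essentially how such facts are argued in \cite{2014Y}. No gaps.
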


Let  $\alpha=jj\ldots j \in [n]^{m-1}$, then  $M(\mathbb{A})_{ij}=a_{i\alpha}$.
We can see that Proposition \ref{prop21} is the generalization of the result (1) of Proposition \ref{prop25}
from a primitive tensor to a strongly primitive tensor.
We note that  Proposition \ref{prop25} played an important role in \cite {2014Y},
and if  $\mathbb{A}$ is a nonnegative strongly primitive tensor, then  $\mathbb{A}$ must be a nonnegative primitive tensor,
thus the result (2) of Proposition \ref{prop25} also holds for nonnegative strongly primitive tensors.

\begin{prop}\label{prop26} Let $\mathbb{A}=(a_{i_{1}i_{2}\ldots i_{m}})$ be a nonnegative strongly primitive tensor with order $m$ and
dimension $n$. Then there exists at least one $j\in [n]$ and integers $u,v$ with $1\leq u<v\leq n$ such that $(M(\mathbb{A}))_{uj}>0$ and $(M(\mathbb{A}))_{vj}>0$.
\end{prop}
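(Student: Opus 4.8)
The plan is to observe first that Proposition \ref{prop26} is, verbatim, conclusion (2) of Proposition \ref{prop25} with the hypothesis ``primitive'' strengthened to ``strongly primitive''. Since every strongly primitive tensor is primitive, the quickest route is simply to invoke Proposition \ref{prop25}(2): the conclusion already holds for $\mathbb{A}$ regarded as a primitive tensor, and this is exactly what is asserted. If one is content to cite Proposition \ref{prop25}, the proof is a single line.

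Should a self-contained argument using only strong primitivity be preferred, I would argue by contradiction. Assume that for every $j\in[n]$ the $j$-th column of $M(\mathbb{A})$ has \emph{at most} one positive entry. Applying Proposition \ref{prop21} with $\alpha=jj\ldots j\in[n]^{m-1}$ shows that each such column has \emph{at least} one positive entry, since $(M(\mathbb{A}))_{ij}=a_{ijj\ldots j}$ by Definition \ref{defn17}. Hence every column of $M(\mathbb{A})$ has exactly one positive entry, and I may define $f\colon[n]\to[n]$ by letting $f(j)$ be the row index of the unique positive entry in column $j$.

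Next I would track how $\mathbb{A}$ acts on the standard basis vectors. Since $(\mathbb{A}e_j)_i=a_{ijj\ldots j}=(M(\mathbb{A}))_{ij}$, the assumption gives $\mathbb{A}e_j=c\,e_{f(j)}$ for some $c>0$; more generally $\mathbb{A}(c\,e_\ell)=c^{m-1}\mathbb{A}e_\ell$, which is a positive multiple of $e_{f(\ell)}$. Using the associativity of the general product (so that $\mathbb{A}^t e_j=\mathbb{A}(\mathbb{A}^{t-1}e_j)$), an easy induction then yields $\mathbb{A}^t e_j=c_t\,e_{f^t(j)}$ with $c_t>0$ for every $t\ge1$; in particular $\mathbb{A}^t e_j$ has exactly one positive coordinate. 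On the other hand, strong primitivity provides some $k$ with $\mathbb{A}^k>0$, whence $(\mathbb{A}^k e_j)_i=(\mathbb{A}^k)_{ijj\ldots j}>0$ for all $i\in[n]$. As $n\ge2$, these two conclusions are incompatible, and the resulting contradiction establishes the existence of a column of $M(\mathbb{A})$ with two positive entries in distinct rows, i.e. the desired $j,u,v$.

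The only point requiring care is the bookkeeping in the induction of the previous paragraph: one must check that repeatedly applying $\mathbb{A}$ to a scalar multiple of a single basis vector never enlarges the support beyond one coordinate. This rests entirely on the identity $\mathbb{A}(c\,e_\ell)=c^{m-1}\mathbb{A}e_\ell$ together with the associativity of the tensor product, and the positivity of the scalars $c_t$ is automatic since each step multiplies by a positive majorization-matrix entry. No genuine obstacle arises; the substantive content is already carried by Proposition \ref{prop25}(2), which is why the one-line citation suffices.
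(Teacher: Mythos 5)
Your primary argument is exactly the paper's: since a strongly primitive tensor is primitive, the conclusion follows immediately from Proposition \ref{prop25}(2), which is precisely the one-line justification the paper gives in the remark preceding Proposition \ref{prop26}. Your supplementary self-contained argument (each column of $M(\mathbb{A})$ having a unique positive entry forces $\mathbb{A}^k e_j$ to have one-point support, contradicting $\mathbb{A}^k>0$) is also correct, but it is not needed to match the paper.
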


\begin{prop}\label{prop27}
Let $\mathbb{A}=(a_{i_{1}i_{2}\ldots i_{m}})_{1\leq i_{j}\leq n(j=1,\ldots, m)}$ be a nonnegative tensor with order $m$ and dimension $n$ and
$\mathbb{A}\not=\mathbb{J}$.
For given $i\in [n]$, if  $a_{i\alpha}=a_{jii\ldots i}=1$ for any  $\alpha \in[n]^{m-1}$ and any $j\in[n]\backslash \{i\}$,
then $\mathbb{A}$ is strongly primitive with $\eta(\mathbb{A})=2$.
\end{prop}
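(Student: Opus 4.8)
The plan is to prove the two halves of $\eta(\mathbb{A})=2$ separately: first that $\mathbb{A}^{2}>0$, which gives $\eta(\mathbb{A})\le 2$, and then that $\mathbb{A}$ itself fails to be positive, which gives $\eta(\mathbb{A})\ge 2$.

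For the upper bound I would work directly from the product formula (\ref{eq21}) with $k=2$, namely
\begin{equation*}
(\mathbb{A}^{2})_{i_{1}\alpha_{2}\ldots\alpha_{m}}=\sum_{i_{2},\ldots,i_{m}=1}^{n} a_{i_{1}i_{2}\ldots i_{m}}\,a_{i_{2}\alpha_{2}}\cdots a_{i_{m}\alpha_{m}},
\end{equation*}
where $i_{1}\in[n]$ and each $\alpha_{l}\in[n]^{m-1}$. Since every summand is nonnegative, it suffices to exhibit one strictly positive summand for each fixed tuple $(i_{1},\alpha_{2},\ldots,\alpha_{m})$. I would select the single term with $i_{2}=i_{3}=\cdots=i_{m}=i$. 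Each factor $a_{i_{l}\alpha_{l}}=a_{i\alpha_{l}}$ then equals $1$ by the hypothesis $a_{i\alpha}=1$ for all $\alpha\in[n]^{m-1}$, regardless of the value of $\alpha_{l}$. The leading factor is $a_{i_{1}ii\ldots i}$, and here I split into two cases: if $i_{1}=i$ it equals $a_{ii\ldots i}=1$ (again an entry of the all-ones $i$-slice), and if $i_{1}=j\neq i$ it equals $a_{jii\ldots i}=1$ by the second hypothesis. In either case the chosen summand equals $1$, so $(\mathbb{A}^{2})_{i_{1}\alpha_{2}\ldots\alpha_{m}}\ge 1>0$. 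As this holds for every index, $\mathbb{A}^{2}>0$, hence $\eta(\mathbb{A})\le 2$.

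For the lower bound, the assumption $\mathbb{A}\neq\mathbb{J}$ is what I would invoke: a $0$-$1$ tensor equals $\mathbb{J}$ exactly when all of its entries are positive, so $\mathbb{A}\neq\mathbb{J}$ forces some entry to vanish, i.e. $\mathbb{A}=\mathbb{A}^{1}\not>0$. Thus no power smaller than $2$ is positive, giving $\eta(\mathbb{A})\ge 2$; combining the two bounds yields $\eta(\mathbb{A})=2$.

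The computation is short, and the only points needing care are the bookkeeping with the multi-indices $\alpha_{l}\in[n]^{m-1}$ in the expansion (\ref{eq21}) and checking that the one chosen summand works uniformly in the two cases $i_{1}=i$ and $i_{1}\neq i$ — indeed the two hypotheses $a_{i\alpha}=1$ and $a_{jii\ldots i}=1$ are tailored precisely so that the leading factor is positive in each case. I expect the only genuinely delicate step to be the lower bound, where one must be explicit that $\mathbb{A}\neq\mathbb{J}$ is read as the existence of a vanishing entry (so that $\mathbb{A}$ is not already positive), rather than merely $\mathbb{A}$ differing from the all-ones tensor in some value; under the $0$-$1$ convention used for the tensors $\mathbb{A}_{k}$ and in Example \ref{exam23}, these two readings coincide.
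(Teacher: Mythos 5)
Your proof is correct and follows essentially the same route as the paper's: both bound $(\mathbb{A}^{2})_{k\alpha_{2}\ldots\alpha_{m}}$ from below by the single summand $a_{kii\ldots i}a_{i\alpha_{2}}\cdots a_{i\alpha_{m}}=1$ obtained by setting all inner indices equal to $i$. Your explicit treatment of the lower bound $\eta(\mathbb{A})\ge 2$ via $\mathbb{A}\neq\mathbb{J}$ (and the remark about reading this as the existence of a zero entry) is a small but reasonable addition that the paper leaves implicit.
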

\begin{proof}
By (\ref{eq21}), for any $k\in [n]$ and $\alpha_2, \ldots, \alpha_m \in[n]^{m-1}$, we have

$(\mathbb{A}^{2})_{k\alpha_{2}\ldots\alpha_{m}}=\sum\limits_{k_{2},k_{3},\ldots, k_{m}=1}^{n} a_{kk_{2}\ldots k_{m}}a_{k_{2}\alpha_{2}}\ldots a_{k_{m}\alpha_{m}}\geq a_{kii\ldots i}a_{i\alpha_{2}}\ldots a_{i\alpha_{m}}=1$,
which implies $\mathbb{A}$ is strongly primitive and $\eta(\mathbb{A})=2$.
\end{proof}

\begin{rem}\label{rem28}
From Proposition \ref{prop27}, we can see that:

{\rm(1)} There exist at least  $n(2^{(n-1)(n^{m-1}-1)}-1)$    strongly primitive tensors such that its strongly primitive degree is equal to 2.

{\rm(2)} We cannot improve the result  of Proposition \ref{prop21}
any more by the fact that  there exists $i\in [n]$ such that  $a_{i\alpha}=1>0$ for any  $\alpha \in[n]^{m-1}$ and
there is exactly one $i$ such that  $a_{i\alpha}>0$ for any  $\alpha\not=ii\ldots i$.

{\rm(3)} Similarly, we cannot improve the result  of Proposition \ref{prop26} any more
by the fact that  there is exactly one $i\in [n]$  such that $(M(\mathbb{A}))_{ui}>0$ for any $u\in [n]$
and for any other $j\in[n]\backslash \{i\}$, there exists only $i\in[n]$ such that $(M(\mathbb{A}))_{ij}>0$.

{\rm(4)} What's more, combining the above arguments, we know whether a nonnegative tensor is a nonnegative strongly primitive tensor or not,
and the value of the strongly primitive degree of a nonnegative strongly primitive tensor  do not depend on the number of nonzero entries,
but the positions of the nonzero entries.
\end{rem}

\begin{prop}\label{prop29}
Let $\mathbb{A}=(a_{i_{1}i_{2}\ldots i_{m}})$ be a nonnegative strongly primitive tensor with order $m$ and dimension $n$.
Then for any $i\in [n]$, there exists some $\alpha \in [n]^{m-1}$ such that $a_{i\alpha}>0$.
\end{prop}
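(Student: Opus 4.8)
The plan is to argue by contradiction, exploiting the product recursion \eqref{eq21}. Suppose the conclusion fails, so that there is some fixed $i\in[n]$ with $a_{i\alpha}=0$ for \emph{every} $\alpha\in[n]^{m-1}$; equivalently, $a_{ii_2\ldots i_m}=0$ for all $i_2,\ldots,i_m\in[n]$. My goal is to show that this single vanishing slice propagates to every power of $\mathbb{A}$, forcing $\mathbb{A}^k$ to have a zero entry for all $k$, which contradicts strong primitivity.

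First I would dispose of the base case: for $k=1$ we have $(\mathbb{A}^1)_{i\alpha}=a_{i\alpha}=0$ for all $\alpha$, so the slice of $\mathbb{A}$ with first index $i$ is identically zero by hypothesis. For the inductive/recursive step with $k\geq 2$, I would apply \eqref{eq21} directly to any entry of $\mathbb{A}^k$ whose \emph{first} index equals $i$:
\[
(\mathbb{A}^{k})_{i\alpha_{2}\ldots\alpha_{m}}
=\sum_{i_{2},\ldots, i_{m}=1}^{n} a_{ii_{2}\ldots i_{m}}\,(\mathbb{A}^{k-1})_{i_{2}\alpha_{2}}\cdots(\mathbb{A}^{k-1})_{i_{m}\alpha_{m}}.
\]
Every summand carries the factor $a_{ii_{2}\ldots i_{m}}$, and as $(i_2,\ldots,i_m)$ ranges over $[n]^{m-1}$ this coefficient is $0$ by the standing assumption. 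Hence the whole sum vanishes, and since the choice of $\alpha_{2},\ldots,\alpha_{m}$ was arbitrary, the entire slice of $\mathbb{A}^{k}$ with first index $i$ is identically zero.

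Consequently $\mathbb{A}^{k}$ has zero entries for \emph{every} positive integer $k$, so $\mathbb{A}^{k}>0$ can never hold. This contradicts the assumption that $\mathbb{A}$ is strongly primitive, and therefore for each $i\in[n]$ there must exist some $\alpha\in[n]^{m-1}$ with $a_{i\alpha}>0$. I would also remark in passing that this statement is precisely the ``row'' counterpart of the ``column'' statement in Proposition \ref{prop21}, which asserts that every $\alpha$ admits a positive entry in some position $i$.

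I do not expect a genuine obstacle here: no real induction is needed beyond the one-step recursion, because the coefficient $a_{ii_2\ldots i_m}$ appears as a common factor at the top level of \eqref{eq21}. The only point requiring care is the index bookkeeping—verifying that the \emph{first} index of the product $\mathbb{A}^{k}$ is inherited from the first index of $\mathbb{A}$, so that a zero $i$-slice of $\mathbb{A}$ forces a zero $i$-slice of $\mathbb{A}^{k}$ rather than merely a zero slice in some permuted coordinate. Once that identification is made explicit, the argument closes immediately.
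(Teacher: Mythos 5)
Your proposal is correct and follows essentially the same route as the paper: assume some $i$-slice of $\mathbb{A}$ vanishes identically and apply \eqref{eq21} once to see that every entry of $\mathbb{A}^{k}$ with first index $i$ is zero, contradicting $\mathbb{A}^{k}>0$. The paper states this only for the specific $k$ witnessing strong primitivity, while you phrase it for all $k$, but the argument is identical.
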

\begin{proof}
Since $\mathbb{A}$ is strongly primitive,
there exists some $k>0$ such that $\mathbb{A}^{k}>0$ by Definition \ref{defn11}.
Assume that there exists some $i\in [n]$ such that $a_{i\alpha}=0$ for any $\alpha \in [n]^{m-1}$. Then by (\ref{eq21}),  we have
\begin{equation*}
(\mathbb{A}^{k})_{i\alpha_{2}\ldots\alpha_{m}}=\sum\limits_{i_{2},i_{3},\ldots,i_{m}=1}^{n} a_{ii_{2}i_{3}\ldots i_{m}}(\mathbb{A}^{k-1})_{i_{2}\alpha_{2}}\ldots (\mathbb{A}^{k-1})_{i_{m}\alpha_{m}}=0,
\end{equation*}
which leads to a contraction.
\end{proof}

\begin{rem}\label{rem210}
Let $\mathbb{A}=(a_{i_{1}i_{2}\ldots i_{m}})_{1\leq i_{j}\leq n(j=1,\ldots, m)}$ be a nonnegative tensor with order $m$ and dimension $n$.
For given $i\in [n]$, we take  $a_{i\alpha}=a_{jii\ldots i}=1$ for any  $\alpha \in[n]^{m-1}$ and any $j\in [n]\setminus \{i\}$,
and any other entry $a_{i_{1}i_{2}\ldots i_{m}}=0$. Then $\mathbb{A}$ is strongly primitive with $\eta(\mathbb{A})=2$ by Proposition \ref{prop27}.
This implies that we cannot improve the result  of Proposition \ref{prop29} any more,
and it indicates the importance of the positions of the nonzero entries again.
\end{rem}

\begin{prop}\label{prop211} Let $\mathbb{A}$ be a nonnegative strongly primitive tensor and $k=\eta(\mathbb{A})$.
Then for any integer $t>k>0$, we have $\mathbb{A}^t>0$.
\end{prop}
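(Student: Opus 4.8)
The plan is to reduce the whole statement to a single inductive step: it suffices to show that whenever $\mathbb{A}^{s}>0$ for some positive integer $s$, one already has $\mathbb{A}^{s+1}>0$. Granting this, since $k=\eta(\mathbb{A})$ guarantees $\mathbb{A}^{k}>0$ by Definition \ref{defn111}, a straightforward induction on $t$ with base case $t=k+1$ yields $\mathbb{A}^{t}>0$ for every integer $t>k$. So the entire work is in propagating positivity by one power.

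For the inductive step I would expand $\mathbb{A}^{s+1}=\mathbb{A}\cdot\mathbb{A}^{s}$ by means of the product formula (\ref{eq21}): for any first index $i\in[n]$ and any admissible multi-indices $\alpha_{2},\ldots,\alpha_{m}$ (which now range over $[n]^{(m-1)^{s}}$),
\begin{equation*}
(\mathbb{A}^{s+1})_{i\alpha_{2}\ldots\alpha_{m}}=\sum_{i_{2},\ldots,i_{m}=1}^{n}a_{ii_{2}\ldots i_{m}}(\mathbb{A}^{s})_{i_{2}\alpha_{2}}\cdots(\mathbb{A}^{s})_{i_{m}\alpha_{m}}.
\end{equation*}
Since $\mathbb{A}$ is nonnegative, so is $\mathbb{A}^{s}$, and hence every summand above is nonnegative; it is therefore enough to exhibit one strictly positive summand. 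By Proposition \ref{prop29} there exist indices $i_{2},\ldots,i_{m}$ with $a_{ii_{2}\ldots i_{m}}>0$, and because $\mathbb{A}^{s}>0$ each factor $(\mathbb{A}^{s})_{i_{j}\alpha_{j}}$ is strictly positive. Thus that particular summand is positive, the whole sum is positive, and since $i$ and the $\alpha_{j}$ were arbitrary we conclude $\mathbb{A}^{s+1}>0$.

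I do not expect a genuine obstacle here; the content is concentrated entirely in Proposition \ref{prop29}, which rules out the only configuration that could break positivity, namely a first index $i$ whose entire slice $a_{i\,\cdot}$ vanishes. The nonnegativity of $\mathbb{A}$ does the rest, since it guarantees there is no cancellation among the summands. The only point deserving care is the bookkeeping of the multi-index ranges in (\ref{eq21}) as $s$ grows, but the positivity argument is uniform in those ranges, so no case analysis is needed. One could alternatively use associativity to factor $\mathbb{A}^{t}=\mathbb{A}^{t-k}\mathbb{A}^{k}$ and combine $\mathbb{A}^{k}>0$ with a positive entry of $\mathbb{A}^{t-k}$, but that route requires separately establishing that $\mathbb{A}^{t-k}$ has no vanishing first slice (itself a short induction off Proposition \ref{prop29}); the one-step induction above sidesteps this and is the cleanest.
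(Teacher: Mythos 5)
Your proposal is correct and follows essentially the same route as the paper: expand $\mathbb{A}^{s+1}=\mathbb{A}\cdot\mathbb{A}^{s}$ via (\ref{eq21}), invoke Proposition \ref{prop29} to produce a positive entry $a_{ij_2\ldots j_m}$ in the $i$-th slice, and combine it with the positivity of $\mathbb{A}^{s}$ to get a positive summand. The only cosmetic difference is that you state the one-step propagation for a general exponent $s$ and induct explicitly, whereas the paper writes out only the case $s=k$ and leaves the iteration implicit.
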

\begin{proof}
It is clear that  $\mathbb{A}^k>0$ by $k=\eta(\mathbb{A})$. We only need to show  $\mathbb{A}^{k+1}>0$,
say, for any $i\in [n]$,  and
any $\alpha_{2}, \ldots, \alpha_{m}\in [n]^{{(m-1)}^{k}}$, we show   $(\mathbb{A}^{k+1})_{i\alpha_{2}\ldots\alpha_{m}}>0$.

By Proposition \ref{prop29}, there  exists some $\alpha=j_2j_3\ldots j_m \in [n]^{m-1}$ such that $a_{i\alpha}=a_{ij_2\ldots j_m}>0$.
By  $\mathbb{A}^k>0$ we have $(\mathbb{A}^{k})_{j_2\alpha_{2}}>0$, $\ldots,$ $(\mathbb{A}^{k})_{j_m\alpha_{m}}>0$, then by (\ref{eq21}), we have

$(\mathbb{A}^{k+1})_{i\alpha_{2}\ldots\alpha_{m}}=\sum\limits_{i_{2},i_{3},\ldots,
i_{m}=1}^{n} a_{ii_{2}i_{3}\ldots i_{m}}(\mathbb{A}^{k})_{i_{2}\alpha_{2}}(\mathbb{A}^{k})_{i_{3}\alpha_{3}}\ldots (\mathbb{A}^{k})_{i_{m}\alpha_{m}}$

\hskip2.5cm $\geq a_{ij_{2}j_{3}\ldots j_{m}}(\mathbb{A}^{k})_{j_{2}\alpha_{2}}\ldots (\mathbb{A}^{k})_{j_{m}\alpha_{m}}$

\hskip2.5cm $>0.$
\end{proof}

\begin{prop}\label{prop212} Let $\mathbb{A}$ be a nonnegative tensor with order $m$ and dimension $n$,
and  $t$ be a positive integer. Then $\mathbb{A}$ is strongly primitive if and only if $\mathbb{A}^{t}$ is strongly primitive.
\end{prop}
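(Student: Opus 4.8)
The plan is to reduce everything to the single identity $(\mathbb{A}^{t})^{\ell}=\mathbb{A}^{t\ell}$, valid for all positive integers $t,\ell$, which is a consequence of the associativity of the general product (\cite{2013S}, Theorem 1.1). First I would record the bookkeeping of orders: since the general product of an order $m$ tensor with an order $k$ tensor has order $(m-1)(k-1)+1$ by Definition \ref{defn12}, an easy induction gives that $\mathbb{A}^{k}$ has order $(m-1)^{k}+1$. Writing $\mathbb{B}=\mathbb{A}^{t}$, which has order $(m-1)^{t}+1$, the tensor $\mathbb{B}^{\ell}$ then has order $\bigl((m-1)^{t}\bigr)^{\ell}+1=(m-1)^{t\ell}+1$, which is exactly the order of $\mathbb{A}^{t\ell}$; associativity then identifies the two tensors entrywise, so $(\mathbb{A}^{t})^{\ell}=\mathbb{A}^{t\ell}$.

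Granting this identity, both implications are short. For the forward direction, suppose $\mathbb{A}$ is strongly primitive and put $k=\eta(\mathbb{A})$. By Proposition \ref{prop211} we have $\mathbb{A}^{s}>0$ for every integer $s\ge k$. Choosing any $\ell$ with $t\ell\ge k$ (for instance $\ell=k$, since $t\ge 1$ forces $tk\ge k$) yields $(\mathbb{A}^{t})^{\ell}=\mathbb{A}^{t\ell}>0$, so $\mathbb{A}^{t}$ is strongly primitive. For the converse, suppose $\mathbb{A}^{t}$ is strongly primitive; then $(\mathbb{A}^{t})^{\ell}>0$ for some $\ell>0$, whence $\mathbb{A}^{t\ell}=(\mathbb{A}^{t})^{\ell}>0$ exhibits a positive integer $r=t\ell$ with $\mathbb{A}^{r}>0$, so $\mathbb{A}$ is strongly primitive by Definition \ref{defn111}.

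The only genuinely nontrivial point, and hence the step I would treat most carefully, is the identity $(\mathbb{A}^{t})^{\ell}=\mathbb{A}^{t\ell}$ together with the verification that the orders of $(\mathbb{A}^{t})^{\ell}$ and $\mathbb{A}^{t\ell}$ agree; once associativity is invoked correctly, everything else is immediate. It is worth emphasizing that Proposition \ref{prop211} is used in an essential way in the forward direction: it is what allows a single power $\mathbb{A}^{tk}$ to be positive even when $tk$ exceeds $\eta(\mathbb{A})$, so that $\mathbb{A}^{t}$ inherits strong primitivity regardless of the size of $t$.
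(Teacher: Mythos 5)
Your proof is correct and follows essentially the same route as the paper: both directions rest on the identity $(\mathbb{A}^{t})^{s}=\mathbb{A}^{st}$ from associativity, with Proposition \ref{prop211} supplying $\mathbb{A}^{st}>0$ for $st\ge\eta(\mathbb{A})$ in the forward direction and the converse being immediate. The only difference is that you spell out the order bookkeeping and the ``obvious'' converse in more detail than the paper does.
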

\begin{proof}
Firstly, the sufficiency is obvious. Now we show the necessity.
Let $k=\eta(\mathbb{A})$. Then $\mathbb{A}^k>0$ by   $\mathbb{A}$ is strongly primitive.
Let  $s$ be a positive integer such that $st\geq k$, then   $\mathbb{A}^{st}>0$ by Proposition \ref{prop211}.
Thus  $(\mathbb{A}^{t})^s=\mathbb{A}^{st}>0$, which implies $\mathbb{A}^{t}$ is strongly primitive.
\end{proof}

\section{A characterization   of the (strongly) primitive tensor with order $m$ and dimension $2$}
\hskip.6cm In this section, we  study  primitive tensors and  strongly primitive tensors in this paper,
show that an order $m$ dimension 2 tensor is primitive if and only if its majorization matrix is primitive,
and  obtain   the characterization of order $m$ dimension 2 strongly primitive tensors and   the bound of the strongly primitive degree.

\begin{lem}\label{lem31}{\rm(See \cite {2013S}, Corollary 4.1)}
 Let $\mathbb{A}$ be a nonnegative tensor with order $m$ and dimension $n$.
 If $M(\mathbb{A})$ is primitive, then $\mathbb{A}$ is also primitive and  in this case,
 we have $\gamma(\mathbb{A})\leq \gamma(M(\mathbb{A}))\leq (n-1)^{2}+1$.
\end{lem}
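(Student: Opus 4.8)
The plan is to reduce the statement to a positivity comparison between $M(\mathbb{A}^{k})$ and the ordinary matrix power $(M(\mathbb{A}))^{k}$, and then to invoke the characterization of primitivity in Proposition \ref{pro18} together with Wielandt's classical bound for primitive matrices. Writing $M = M(\mathbb{A})$, the only thing I really need is the implication: if $M$ is a primitive matrix of order $n$ and $r$ is its exponent, so that $M^{r} > 0$, then $M(\mathbb{A}^{r}) > 0$. Once this is available, Proposition \ref{pro18} immediately yields that $\mathbb{A}$ is primitive and that $\gamma(\mathbb{A}) \leq r = \gamma(M(\mathbb{A}))$, while in the matrix case Wielandt's bound gives $\gamma(M(\mathbb{A})) = \exp(M) \leq (n-1)^{2}+1$; chaining these two inequalities is exactly the assertion of the lemma.

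The heart of the argument is therefore a support (zero-pattern) propagation lemma, which I would prove by induction on $k$: for every $k \geq 1$ and all $i,j \in [n]$, if $\bigl((M(\mathbb{A}))^{k}\bigr)_{ij} > 0$ then $(M(\mathbb{A}^{k}))_{ij} > 0$. The base case $k=1$ is trivial since $M(\mathbb{A}^{1}) = M(\mathbb{A})$. For the inductive step I specialize equation (\ref{eq21}) to the multi-indices $\alpha_{2} = \cdots = \alpha_{m} = jj\cdots j$, which by Definition \ref{defn17} turns each factor $(\mathbb{A}^{k-1})_{i_{s}\alpha_{s}}$ into the matrix entry $(M(\mathbb{A}^{k-1}))_{i_{s} j}$, giving
$$(M(\mathbb{A}^{k}))_{ij} = \sum_{i_{2},\ldots,i_{m}=1}^{n} a_{i i_{2}\cdots i_{m}}\,(M(\mathbb{A}^{k-1}))_{i_{2} j}\cdots (M(\mathbb{A}^{k-1}))_{i_{m} j}.$$
Retaining only the ``diagonal'' summands $i_{2} = \cdots = i_{m} = \ell$ and using $a_{i\ell\cdots\ell} = (M(\mathbb{A}))_{i\ell}$ yields the lower bound
$$(M(\mathbb{A}^{k}))_{ij} \geq \sum_{\ell=1}^{n} (M(\mathbb{A}))_{i\ell}\,\bigl((M(\mathbb{A}^{k-1}))_{\ell j}\bigr)^{m-1}.$$
Since all entries are nonnegative, the right-hand side is positive as soon as there is some $\ell$ with $(M(\mathbb{A}))_{i\ell} > 0$ and $(M(\mathbb{A}^{k-1}))_{\ell j} > 0$. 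But $\bigl((M(\mathbb{A}))^{k}\bigr)_{ij} > 0$ means precisely that such an $\ell$ exists with $(M(\mathbb{A}))_{i\ell} > 0$ and $\bigl((M(\mathbb{A}))^{k-1}\bigr)_{\ell j} > 0$, and the induction hypothesis upgrades the latter to $(M(\mathbb{A}^{k-1}))_{\ell j} > 0$. Hence $(M(\mathbb{A}^{k}))_{ij} > 0$, completing the induction.

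Applying this with $k = r = \gamma(M(\mathbb{A}))$ and using $M^{r} > 0$ gives $M(\mathbb{A}^{r}) > 0$, and the conclusion follows as described in the first paragraph. The step I expect to be the genuine obstacle is the inductive estimate: one cannot hope for the naive entrywise domination $M(\mathbb{A}^{k}) \geq (M(\mathbb{A}))^{k}$, because the general tensor product introduces the powers $\bigl((M(\mathbb{A}^{k-1}))_{\ell j}\bigr)^{m-1}$ rather than plain products, so the comparison must be carried out at the level of supports, and the crucial technical move is to discard all off-diagonal terms in (\ref{eq21}) and keep a single positive diagonal contribution. Everything else is bookkeeping with Definition \ref{defn17}, Proposition \ref{pro18}, and Wielandt's bound.
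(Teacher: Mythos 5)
Your proposal is correct. Note, however, that the paper offers no proof of this lemma at all: it is imported verbatim as Corollary 4.1 of \cite{2013S}, so there is nothing internal to compare against. Your argument — establishing by induction on $k$ that the support of $M(\mathbb{A}^{k})$ contains the support of $(M(\mathbb{A}))^{k}$, by specializing (\ref{eq21}) to $\alpha_{2}=\cdots=\alpha_{m}=jj\cdots j$ and retaining only the diagonal summands $i_{2}=\cdots=i_{m}=\ell$, then concluding via Proposition \ref{pro18} and Wielandt's bound — is sound and is essentially the standard zero-pattern comparison underlying Shao's original proof. Your closing caveat is also well taken: one only gets domination of supports, not the entrywise inequality $M(\mathbb{A}^{k})\geq (M(\mathbb{A}))^{k}$, because of the $(m-1)$-th powers, but support domination is all that primitivity requires.
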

\begin{them}\label{thm32}
Let $\mathbb{A}$ be a nonnegative tensor with order $m$ and dimension $n=2$. Then $\mathbb{A}$ is primitive if and only if $M(\mathbb{A})$ is primitive.
\end{them}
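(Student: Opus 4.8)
The plan is to prove the two implications separately, observing that one direction is essentially free. If $M(\mathbb{A})$ is primitive, then Lemma \ref{lem31} immediately gives that $\mathbb{A}$ is primitive (this holds for every dimension $n$, not only $n=2$), so nothing further is needed there. All the content lies in the converse: assuming $\mathbb{A}$ is primitive with $n=2$, I must show that the $2\times 2$ nonnegative matrix $M(\mathbb{A})$ is primitive. I would write $M(\mathbb{A})$ entrywise as $(M(\mathbb{A}))_{11}=p$, $(M(\mathbb{A}))_{12}=q$, $(M(\mathbb{A}))_{21}=r$, $(M(\mathbb{A}))_{22}=s$, so that $p=a_{11\ldots 1}$, $q=a_{12\ldots 2}$, $r=a_{21\ldots 1}$, $s=a_{22\ldots 2}$ by Definition \ref{defn17}.

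The one computation that drives everything is that $\mathbb{A}e_j$ is exactly the $j$-th column of $M(\mathbb{A})$: for $j\in\{1,2\}$ the formula $(\mathbb{A}x)_i=\sum_{i_2,\ldots,i_m} a_{ii_2\ldots i_m}x_{i_2}\cdots x_{i_m}$ with $x=e_j$ leaves only the term $i_2=\cdots=i_m=j$, giving $(\mathbb{A}e_j)_i=a_{ij\ldots j}=(M(\mathbb{A}))_{ij}$. Since $\mathbb{A}$ is primitive, Proposition \ref{prop25}(1) forces $q=(M(\mathbb{A}))_{12}>0$ and $r=(M(\mathbb{A}))_{21}>0$, so $M(\mathbb{A})$ is already irreducible. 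A direct squaring then shows that, with $q,r>0$, one has $M(\mathbb{A})^2>0$ as soon as $p>0$ or $s>0$ (the entries of $M(\mathbb{A})^2$ being $p^2+qr$, $q(p+s)$, $r(p+s)$, $rq+s^2$). Thus it suffices to prove that $p>0$ or $s>0$; equivalently, I must rule out the period-$2$ pattern $p=s=0$.

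To finish I would argue by contradiction, assuming $p=s=0$. By the column computation, $\mathbb{A}e_1=(0,r)^T=r\,e_2$ and $\mathbb{A}e_2=(q,0)^T=q\,e_1$ with $q,r>0$, hence $T_{\mathbb{A}}(e_1)=r^{1/(m-1)}e_2$ and $T_{\mathbb{A}}(e_2)=q^{1/(m-1)}e_1$. Using the positive homogeneity $T_{\mathbb{A}}(\lambda x)=\lambda\,T_{\mathbb{A}}(x)$ for $\lambda\ge 0$ (which follows from $\mathbb{A}(\lambda x)=\lambda^{m-1}\mathbb{A}x$), a one-line induction shows that $T_{\mathbb{A}}^k(e_1)$ is a positive multiple of $e_2$ for $k$ odd and of $e_1$ for $k$ even. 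In particular $T_{\mathbb{A}}^k(e_1)$ always has a zero coordinate and is never positive, so taking the nonnegative nonzero vector $x=e_1$ contradicts the primitivity of $\mathbb{A}$ in Definition \ref{defn11}. Therefore $p>0$ or $s>0$, and by the previous paragraph $M(\mathbb{A})^2>0$, so $M(\mathbb{A})$ is primitive. The main obstacle is precisely excluding the period-$2$ pattern: once one notices that $\mathbb{A}e_j$ is the $j$-th column of $M(\mathbb{A})$, the orbit of $T_{\mathbb{A}}$ on the standard basis vectors is governed entirely by $M(\mathbb{A})$, and the dimension $n=2$ is what makes this orbit unable to escape a single basis direction when the diagonal vanishes.
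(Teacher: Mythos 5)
Your proof is correct, but the necessity direction takes a genuinely different route from the paper's. The paper proves the contrapositive through tensor powers: it invokes Theorem \ref{thm110} to get $\gamma(\mathbb{A})\le (n-1)^2+1=2$, so primitivity forces $M(\mathbb{A}^2)>0$ by Proposition \ref{pro18}, and then an explicit expansion of $(\mathbb{A}^2)_{ijj\ldots j}$ (its equations (3.2)--(3.5)) exhibits a zero entry of $M(\mathbb{A}^2)$ for each non-primitive zero pattern of $M(\mathbb{A})$. You instead never touch $\mathbb{A}^2$: you dispose of the ``zero off-diagonal'' patterns by citing Proposition \ref{prop25}(1) (which for $n=2$ pins down $(M(\mathbb{A}))_{12}>0$ and $(M(\mathbb{A}))_{21}>0$ at once), and you kill the remaining period-two pattern $p=s=0$ by a direct dynamical argument: since $\mathbb{A}e_j$ is the $j$-th column of $M(\mathbb{A})$ and $T_\mathbb{A}$ is positively homogeneous, the orbit $T_\mathbb{A}^k(e_1)$ alternates between the two coordinate axes and is never positive, contradicting Definition \ref{defn11} directly. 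Your version is more elementary in that it needs neither the degree bound of Theorem \ref{thm110} nor any tensor-power computation, and the identity $\mathbb{A}e_j=M(\mathbb{A})e_j$ makes transparent why the equivalence is special to $n=2$ (for $n\ge 3$ the image of a basis vector need not stay on a single axis, consistent with Example \ref{exam33}); the paper's computation, while heavier, introduces the factorizations (3.4)--(3.5) whose pattern is reused in its Lemma 3.6 and Theorem 3.7. Both case divisions cover the same three non-primitive patterns, so the proof is complete.
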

\begin{proof}
Firstly, the sufficiency is obvious by Lemma \ref{lem31}.
Now we only show the necessity. Clearly,   all primitive matrices  of order 2 are listed as follows:
\begin{equation}\label{eq31}
\left(\begin{array}{cc}
1 & 1\\
1 & 0
\end{array}\right),
\left(\begin{array}{cc}
0 & 1\\
1 & 1
\end{array}\right),
\left(\begin{array}{cc}
1 & 1\\
1 & 1
\end{array}\right).
\end{equation}

Let $\mathbb{A}$ be primitive.  Then $\gamma(\mathbb{A})\leq 2$ by Theorem \ref{thm110} and $M(\mathbb{A}^{2})>0$ by Proposition \ref{pro18}.
Now we assume that $M(\mathbb{A})$ is not primitive, we will show $\mathbb{A}$ is also not primitive.

It is not difficult to find that

$(\mathbb{A}^{2})_{ijj\ldots j}$

$=\sum\limits_{i_{2},i_{3},\ldots, i_{m}=1}^{2} a_{ii_{2}i_{3}\ldots i_{m}}a_{i_{2}jj\ldots j}\ldots a_{i_{m}jj\ldots j}$ 
\begin{eqnarray}
\noindent=a_{i22\ldots 2}(a_{2jj\ldots j})^{m-1}+
    \sum\limits_{i_{2},i_{3},\ldots, i_{m}=1, i_2i_3\ldots i_m\not=22\ldots 2}^{2}
    a_{ii_{2}\ldots i_{m}}a_{i_{2}jj\ldots j}\ldots a_{i_{m}jj\ldots j} \label{eq32} \end{eqnarray}
\begin{eqnarray}
=a_{i11\ldots 1}(a_{1jj\ldots j})^{m-1}+\sum\limits_{i_{2},i_{3},\ldots, i_{m}=1, i_2i_3\ldots i_m\not=11\ldots 1}^{2}
    a_{ii_{2}\ldots i_{m}}a_{i_{2}jj\ldots j}\ldots a_{i_{m}jj\ldots j} \label{eq33}.
\end{eqnarray}

In (\ref{eq32}), we note that $i_2i_3\ldots i_m\not=22\ldots 2$, which implies that there exists at least one entry, say,
$i_s=1$ where $2\leq s\leq m$,
then $ a_{1jj\ldots j}\in \{a_{i_{2}jj\ldots j}, \ldots, a_{i_{m}jj\ldots j}\}.$

Similarly, in (\ref{eq33}), we note that $i_2i_3\ldots i_m\not=11\ldots 1$, which implies that there exists at least one entry, say,
$i_s=2$ where $2\leq s\leq m$,
then $ a_{2jj\ldots j}\in \{a_{i_{2}jj\ldots j}, \ldots, a_{i_{m}jj\ldots j}\}.$

Thus, by (\ref{eq32}), (\ref{eq33}) and the above arguments, we have
\begin{eqnarray}
M(\mathbb{A}^{2})_{ij}=(\mathbb{A}^{2})_{ijj\ldots j}&=& a_{i22\ldots 2}(a_{2jj\ldots j})^{m-1}+a_{1jj\ldots j}P \label{eq34}\\
&=& a_{i11\ldots 1}(a_{1jj\ldots j})^{m-1}+a_{2jj\ldots j}Q  \label{eq35}.
\end{eqnarray}

Since $M(\mathbb{A})$ is not primitive, by (\ref{eq31}), we can complete the proof by the following two cases.

\noindent {\bf Case 1: }
$M(\mathbb{A})=\left(\begin{array}{cc}
* & 0\\
* & *
\end{array}\right)
or \left(\begin{array}{cc}
* & *\\
0 & *
\end{array}\right)$.

\noindent \textbf{Subcase 1.1: }$M(\mathbb{A})_{12}=0$.

Then $a_{12\ldots 2}=0$. By (\ref{eq34}), we have $M(\mathbb{A}^{2})_{12}=(\mathbb{A}^{2})_{12\ldots 2}=0$, which implies $\mathbb{A}^{2}$ is not essential positive.

\noindent \textbf{Subcase 1.2: }$M(\mathbb{A})_{21}=0$.

Then $a_{21\ldots 1}=0$. By (\ref{eq35}), we have $M(\mathbb{A}^{2})_{21}=(\mathbb{A}^{2})_{21\ldots 1}=0$, which implies $\mathbb{A}^{2}$ is not essential positive.

\noindent {\bf Case 2: }
$M(\mathbb{A})=\left(\begin{array}{cc}
0 & 1\\
1 & 0
\end{array}\right)$.

Then we have $M(\mathbb{A})_{11}=M(\mathbb{A})_{22}=0$, that is $a_{11\ldots 1}=a_{22\ldots 2}=0$,
by (\ref{eq35}) we have $M(\mathbb{A}^{2})_{12}=(\mathbb{A}^{2})_{12\ldots 2}=0$, which implies $\mathbb{A}^{2}$ is not essential positive.

Based on the above two cases and Proposition \ref{pro16}, we complete the proof of the necessity.
\end{proof}

A nature question is whether the result of Theorem \ref{thm32} is true for $n\geq 3$ or not.
The following Example \ref{exam33} shows that the necessity of Theorem \ref{thm32} is false with $n\geq 3$.

\begin{exam}\label{exam33}
Let $\mathbb{A}=(a_{i_{1}i_{2}\ldots i_{m}})$ be a nonnegative tensor of order $m$ and dimension $n\geq 3$, where
\begin{equation*}
a_{i_{1}i_{2}\ldots i_{m}}=\left\{\begin{array}{ll}
 0,   & \mbox{ if } i_{1}=1, i_{2}=i_{3}=\ldots =i_{m}\neq 1; \\
 1,   & \mbox{ otherwise}.
            \end{array}\right.
\end{equation*}
Then $\mathbb{A}$ is  (strongly) primitive, but $M(\mathbb{A})$ is not primitive.
\end{exam}
\begin{proof}
By direct calculation and Definition \ref{defn12}, we know that $\mathbb{A}^{2}$ is the tensor of order $(m-1)^{2}+1$ and dimension $n$, and for any $1\leq i\leq n$, we have
\begin{eqnarray*}
(\mathbb{A}^{2})_{i\alpha_{2}\alpha_{3}\ldots \alpha_{m}}&=& \sum\limits_{i_{2},i_{3},\ldots, i_{m}=1}^{2} a_{ii_{2}i_{3}\ldots i_{m}}a_{i_{2}\alpha_{2}}\ldots a_{i_{m}\alpha_{m}}\\
&\geq& \left\{\begin{array}{ll}
 a_{12\ldots m}a_{2\alpha_{2}}\ldots a_{m\alpha_{m}}=1,   &\mbox{ if }  i=1; \\
 a_{ii\ldots i}a_{i\alpha_{2}}\ldots a_{i\alpha_{m}}=1,  & \mbox{ otherwise}.
            \end{array}\right.
\end{eqnarray*}
Obviously, $\mathbb{A}^{2}$ is positive, then $\mathbb{A}$ is strongly primitive with $\eta(\mathbb{A})=2$
and thus $\mathbb{A}$ is primitive with $\gamma(\mathbb{A})=2$.

On the other hand, by the definition of $\mathbb{A}$, we have
$M(\mathbb{A})=\left(\begin{array}{cccc}
1 & 0 & \ldots & 0\\
1 & 1 & \ldots & 1\\
 &  & \vdots &  \\
1 & 1 & \ldots & 1
\end{array}\right)$. Since the associated digraph of $M(\mathbb{A})$ is not strongly connected, thus $M(\mathbb{A})$ is not primitive.
\end{proof}

Next, we will study the strongly primitive degree of order $m$ and dimension 2 tensors. Firstly, we  discuss an example with order $m=5$ and dimension $n=2$ tensor as follows.

\begin{defn} {\rm (See \cite{2013S2})} \label{defn20} Let $\mathbb{A}$ be a tensor with order $m$ and dimension $n$.
The $i$-th slice of $\mathbb{A}$,  denoted by  $\mathbb{A}[i]$, is the subtensor of $\mathbb{A}$ with  order $m-1$ and dimension $n$ such that $(\mathbb{A}[i])_{i_{2}\ldots i_{m}}=a_{ii_{2}\ldots i_{m}}$.
\end{defn}

\begin{exam}\label{exam35}
Let $\mathbb{A}=(a_{i_{1}i_{2}i_{3}i_{4}i_{5}})_{1\leq i_{j}\leq 2(j=1,\ldots, 5)}$
 be a nonnegative tensor with order $m=5$ and dimension $n=2$,
where $a_{12122}=a_{21121}=0$ and other $a_{i_{1}i_{2}i_{3}i_{4}i_{5}}=1$.
Then there exists at least one zero element in each slice of $\mathbb{A}^{2}$.
\end{exam}
\begin{proof}
Let $\alpha_{1}=2122$,  $\alpha_{2}=1121$, and denote $\beta_{2}= \beta_{4}=\beta_{5}=\alpha_{1}$,  $\beta_{3}=\alpha_{2}$.
Then we have

\hskip0.6cm $(A^{2})_{1\beta_{2}\beta_{3}\beta_{4}\beta_{5}}$

\noindent $\xlongequal{\quad\quad}\sum\limits_{i_{2},i_{3},i_{4},i_{5}=1}^{2} a_{1i_{2}i_{3}i_{4} i_{5}}a_{i_{2}\beta_{2}}a_{i_{3}\beta_{3}}a_{i_{4}\beta_{4}}a_{i_{5}\beta_{5}}$

 \noindent $\xlongequal {\quad\quad}\sum\limits_{i_{3},i_{4},i_{5}=1}^{2} a_{11i_{3}i_{4}i_{5}}a_{1\beta_{2}}a_{i_{3}\beta_{3}}a_{i_{4}\beta_{4}}a_{i_{5}\beta_{5}}
+\sum\limits_{i_{3},i_{4},i_{5}=1}^{2} a_{12i_{3}i_{4}i_{5}}a_{2\beta_{2}}a_{i_{3}\beta_{3}}a_{i_{4}\beta_{4}}a_{i_{5}\beta_{5}}$

\noindent $\xlongequal{a_{1\beta_{2}}=0}\sum\limits_{i_{3},i_{4},i_{5}=1}^{2} a_{12i_{3}i_{4}i_{5}}a_{2\beta_{2}}a_{i_{3}\beta_{3}}a_{i_{4}\beta_{4}}a_{i_{5}\beta_{5}}$

\noindent $\xlongequal{\quad\quad}\sum\limits_{i_{4} ,i_{5}=1}^{2} a_{121i_{4}i_{5}}a_{2\beta_{2}}a_{1\beta_{3}}a_{i_{4}\beta_{4}}a_{i_{5}\beta_{5}}+\sum\limits_{i_{4} ,i_{5}=1}^{2} a_{122i_{4}i_{5}}a_{2\beta_{2}}a_{2\beta_{3}}a_{i_{4}\beta_{4}}a_{i_{5}\beta_{5}}   $

\noindent $\xlongequal{a_{2\beta_{3}}=0}\sum\limits_{i_{4},i_{5}=1}^{2}a_{121i_{4}i_{5}}a_{2\beta_{2}}a_{1\beta_{3}}a_{i_{4}\beta_{4}}a_{i_{5}\beta_{5}}$

\noindent $\xlongequal{\quad\quad}\sum\limits_{i_{5}=1}^{2} a_{1211i_{5}}a_{2\beta_{2}}a_{1\beta_{3}}a_{1\beta_{4}}a_{i_{5}\beta_{5}}
+\sum\limits_{i_{5}=1}^{2} a_{1212i_{5}}a_{2\beta_{2}}a_{1\beta_{3}}a_{2\beta_{4}}a_{i_{5}\beta_{5}} $

\noindent $\xlongequal{a_{1\beta_{4}}=0}\sum\limits_{i_{5}=1}^{2}a_{1212i_{5}}a_{2\beta_{2}}a_{1\beta_{3}}a_{2\beta_{4}}a_{i_{5}\beta_{5}} $

\noindent $\xlongequal{\quad\quad}a_{12121}a_{2\beta_{2}}a_{1\beta_{3}}a_{2\beta_{4}}a_{1\beta_{5}}
+a_{12122}a_{2\beta_{2}}a_{1\beta_{3}}a_{2\beta_{4}}a_{2\beta_{5}} $

\noindent $ \xlongequal{a_{1\beta_{5}}=0} a_{12122}a_{2\beta_{2}}a_{1\beta_{3}}a_{2\beta_{4}}a_{2\beta_{5}}$

\noindent $\xlongequal{a_{12122}=0}0$.

Similarly, we let $\gamma_{2}=\gamma_{3}=\gamma_{5}=\alpha_{2}$ and $\gamma_{4}=\alpha_{1}$, 
we can show $(\mathbb{A}^{2})_{2\gamma_{2}\gamma_{3}\gamma_{4}\gamma_{5}}=0$ and we omit it.

Combining the above arguments, we know there exists at least one zero element in each slice of $\mathbb{A}^{2}$ by
$(A^{2})_{1\beta_{2}\beta_{3}\beta_{4}\beta_{5}}=(\mathbb{A}^{2})_{2\gamma_{2}\gamma_{3}\gamma_{4}\gamma_{5}}=0$.
\end{proof}

Similarly, the result of  Example \ref{exam35} can be generalized to any nonnegative tensor with order $m$ and dimension $n=2$.

\begin{lem}\label{lem36}
Let $\mathbb{A}=(a_{i_{1}i_{2}\ldots i_{m}})_{1\leq i_{j}\leq 2(j=1,\ldots, m)}$ be a nonnegative tensor with order $m$ and dimension $n=2$.
If there exist $\alpha_{1}=j_{2}j_{3}\ldots j_{m}\in [2]^{m-1}$ and $\alpha_{2}=k_{2}k_{3}\ldots k_{m}\in [2]^{m-1}$ such that
$a_{1\alpha_{1}}=a_{2\alpha_{2}}=0$. For any $2\leq t \leq m$, let $\beta_{t}=\left\{\begin{array}{cc}
                                                                                      \alpha_{2}, & \mbox{ if } j_{t}=1; \\
                                                                                       \alpha_{1}, & \mbox{ if } j_{t}=2,
                                                                                     \end{array}\right.$
and $\gamma_{t}=\left\{\begin{array}{cc}
                                                                                      \alpha_{2}, & \mbox{ if } k_{t}=1; \\
                                                                                       \alpha_{1}, & \mbox{ if } k_{t}=2.
                                                                                     \end{array}\right.$ Then

\begin{equation}\label{eq36}
(\mathbb{A}^{2})_{1\beta_{2}\ldots\beta_{m}}=0, \qquad  (\mathbb{A}^{2})_{2\gamma_{2}\ldots\gamma_{m}}=0.
\end{equation}
\end{lem}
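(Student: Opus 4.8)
The plan is to expand both quantities in (\ref{eq36}) via the recursion (\ref{eq21}) and then argue that \emph{every} summand vanishes, so that each sum of nonnegative products is forced to be zero. Applying (\ref{eq21}) with $k=2$, each second-level factor is $(\mathbb{A}^{1})_{i_t\beta_t}=a_{i_t\beta_t}$ with $\beta_t\in[2]^{m-1}$, so
$$(\mathbb{A}^{2})_{1\beta_{2}\ldots\beta_{m}}=\sum_{i_{2},\ldots,i_{m}=1}^{2} a_{1i_{2}\ldots i_{m}}\,a_{i_{2}\beta_{2}}\cdots a_{i_{m}\beta_{m}},$$
and I would show term by term that each product on the right equals $0$.

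The key observation is that the $\beta_t$ are rigged, through their dependence on $j_t$, precisely so that any disagreement between the summation index $i_t$ and $j_t$ forces the factor $a_{i_t\beta_t}$ to coincide with one of the two prescribed vanishing entries $a_{1\alpha_1}$, $a_{2\alpha_2}$. Concretely, I would split the summation index $(i_2,\ldots,i_m)$ into two cases. If $(i_2,\ldots,i_m)=(j_2,\ldots,j_m)=\alpha_1$, then the leading factor $a_{1i_2\ldots i_m}=a_{1\alpha_1}=0$ annihilates the term. Otherwise there is some $t$ with $i_t\neq j_t$; since the dimension is $2$ this means $\{i_t,j_t\}=\{1,2\}$. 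When $j_t=1$ we have $\beta_t=\alpha_2$ and $i_t=2$, so $a_{i_t\beta_t}=a_{2\alpha_2}=0$; when $j_t=2$ we have $\beta_t=\alpha_1$ and $i_t=1$, so $a_{i_t\beta_t}=a_{1\alpha_1}=0$. In either subcase the product carries a zero factor. As all summands are nonnegative and each one vanishes, I conclude $(\mathbb{A}^{2})_{1\beta_{2}\ldots\beta_{m}}=0$.

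The second identity $(\mathbb{A}^{2})_{2\gamma_{2}\ldots\gamma_{m}}=0$ follows by the identical argument with the roles of the two rows interchanged: now $a_{2\alpha_2}=0$ serves as the killing leading factor when $(i_2,\ldots,i_m)=\alpha_2$, and the definition of $\gamma_t$ in terms of $k_t$ plays exactly the role that $\beta_t$ and $j_t$ did above, converting each mismatch $i_t\neq k_t$ into one of $a_{1\alpha_1}=0$ or $a_{2\alpha_2}=0$.

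I expect no genuine obstacle here. Because the dimension is $2$, every index is $1$ or $2$, so the case analysis is finite and transparent, and the whole argument is merely the structural generalization of the explicit cancellation chain already carried out in Example \ref{exam35}. The only point requiring care is to verify that the definition of $\beta_t$ (resp.\ $\gamma_t$) indeed converts each index mismatch into exactly one of the two known vanishing entries; once this bookkeeping is checked, the conclusion is immediate.
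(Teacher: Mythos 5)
Your proof is correct and is essentially the same argument as the paper's: both rest on the single observation that the definition of $\beta_{t}$ forces $a_{\overline{j_{t}}\beta_{t}}=0$ (and likewise for $\gamma_{t}$), so the only potentially surviving summand is the one with $(i_{2},\ldots,i_{m})=\alpha_{1}$, which is killed by $a_{1\alpha_{1}}=0$. The paper merely organizes this as a sequential collapse of the sum one index at a time, whereas you verify term by term that every summand contains a vanishing factor; the content is identical.
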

\begin{proof}
We first show $(\mathbb{A}^{2})_{1\beta_{2}\ldots\beta_{m}}=0$. For any $2\leq t \leq m$, $j_{t}\in \{1,2\}$,
we denote $\overline{j_{t}}\in \{1,2\} \backslash \{j_{t} \}$.
Then we have $a_{\overline{j_{t}}\beta_{t}}=0$ 
by $a_{1\alpha_{1}}=a_{2\alpha_{2}}=0$, and

\hskip0.9cm$(\mathbb{A}^{2})_{1\beta_{2}\ldots\beta_{m}}$

\noindent$\xlongequal{\quad\quad\quad}\sum\limits_{i_{2},i_{3},\ldots, i_{m}=1}^{2} a_{1i_{2}i_{3}\ldots i_{m}}a_{i_{2}\beta_{2}}\ldots a_{i_{m}\beta_{m}} $

\noindent $\xlongequal{\quad\quad\quad}\sum\limits_{i_{3},\ldots, i_{m}=1}^{2} a_{11i_{3}\ldots i_{m}}a_{1\beta_{2}}a_{i_{3}\beta_{3}}\ldots a_{i_{m}\beta_{m}}
+\sum\limits_{i_{3},\ldots, i_{m}=1}^{2} a_{12i_{3}\ldots i_{m}}a_{2\beta_{2}}a_{i_{3}\beta_{3}}\ldots a_{i_{m}\beta_{m}}$

\noindent$\xlongequal{a_{\overline{j_{2}}\beta_{2}}=0}\sum\limits_{i_{3},\ldots, i_{m}=1}^{2} a_{1j_{2}i_{3}\ldots i_{m}}a_{j_{2}\beta_{j_{2}}}a_{i_{3}\beta_{3}}\ldots a_{i_{m}\beta_{m}}$

\noindent $\xlongequal{\quad\quad\quad}\sum\limits_{i_{4},\ldots, i_{m}=1}^{2} a_{1j_{2}1i_4\ldots i_{m}}a_{j_{2}\beta_{j_{2}}}a_{1\beta_{3}}a_{i_{4}\beta_{4}}\ldots a_{i_{m}\beta_{m}}$

\noindent \hskip1.5cm $+\sum\limits_{i_{4},\ldots, i_{m}=1}^{2} a_{1j_{2}2i_4\ldots i_{m}}a_{j_{2}\beta_{j_{2}}}a_{2\beta_{3}}a_{i_{4}\beta_{4}}\ldots a_{i_{m}\beta_{m}}$

\noindent $\xlongequal{{a_{\overline{j_{3}}\beta_{3}}=0}}\sum\limits_{i_{4},\ldots, i_{m}=1}^{2} a_{1j_{2}j_{3}i_{4}\ldots i_{m}}a_{j_{2}\beta_{j_{2}}}a_{j_{3}\beta_{j_{3}}}a_{i_{4}\beta_{4}}\ldots a_{i_{m}\beta_{m}}$

\noindent $\xlongequal{\quad\quad\quad}\ldots\ldots$

\noindent $\xlongequal{\quad\quad\quad}\sum\limits_{i_{m}=1}^{2} a_{1j_{2}j_{3}\ldots j_{m-1}i_{m}}a_{j_{2}\beta_{j_{2}}}a_{j_{3}\beta_{j_{3}}}\ldots a_{j_{m-1}\beta_{j_{m-1}}}a_{i_{m}\beta_{m}}$

\noindent $\xlongequal{\quad\quad\quad}a_{1j_{2}\ldots j_{m-1}1}a_{j_{2}\beta_{j_{2}}}\ldots a_{j_{m-1}\beta_{j_{m-1}}}a_{1\beta_{m}}+a_{1j_{2}\ldots j_{m-1}2}a_{j_{2}\beta_{j_{2}}}\ldots a_{j_{m-1}\beta_{j_{m-1}}}a_{2\beta_{m}}$

\noindent $\xlongequal{a_{\overline{j_{m}}\beta_{m}}=0}a_{1j_{2}\ldots j_{m}}a_{j_{2}\beta_{j_{2}}}\ldots a_{j_{m}\beta_{j_{m}}} $

\noindent $\xlongequal{\hskip.15cm a_{1\alpha_{1}}=0\hskip.15cm}0$.

Similarly, for any $2\leq t \leq m$, $k_{t}\in \{1,2\}$,  we denote $\overline{k_{t}}\in \{1,2\} \backslash \{k_{t} \}$.
Then we have $a_{\overline{k_{t}}\gamma_{t}}=0$ 
and we can show $(\mathbb{A}^{2})_{2\gamma_{2}\ldots\gamma_{m}}=0$ by
\begin{equation*}
(\mathbb{A}^{2})_{2\gamma_{2}\ldots\gamma_{m}}=\sum\limits_{i_{2},i_{3},\ldots, i_{m}=1}^{2}
a_{2i_{2}i_{3}\ldots i_{m}}a_{i_{2}\gamma_{2}}\ldots a_{i_{m}\gamma_{m}}
\end{equation*}
and the similar process of the above arguments. Thus we  complete the proof of (\ref{eq36}).
\end{proof}

\begin{them}\label{thm37}
Let $\mathbb{A}=(a_{i_{1}i_{2}\ldots i_{m}})_{1\leq i_{j}\leq 2(j=1,\ldots, m)}$ be a nonnegative tensor with order $m$ and dimension $n=2$.
If there exist $\alpha_{1}=j_{2}j_{3}\ldots j_{m}\in [2]^{m-1}$ and $\alpha_{2}=k_{2}k_{3}\ldots k_{m}\in [2]^{m-1}$ such that
$a_{1\alpha_{1}}=a_{2\alpha_{2}}=0$. Then $\mathbb{A}$ is not strongly primitive.
\end{them}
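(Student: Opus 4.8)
The plan is to argue by contradiction, exploiting the fact that Lemma \ref{lem36} already exhibits a self-reproducing structure: the property ``there is a zero entry in the first slice and a zero entry in the second slice'' is inherited by the square of the tensor. First I would restate the hypothesis and the conclusion of Lemma \ref{lem36} in the language of slices. The hypothesis $a_{1\alpha_1}=a_{2\alpha_2}=0$ says precisely that $\mathbb{A}$ has a zero entry in its first slice and a zero entry in its second slice, with zero positions $\alpha_1,\alpha_2\in[2]^{m-1}$. The conclusion (\ref{eq36}) says that $\mathbb{A}^2$, a nonnegative dimension-$2$ tensor of order $(m-1)^2+1$, again has a zero entry in each of its two slices, the new zero positions $\beta_2\ldots\beta_m$ and $\gamma_2\ldots\gamma_m$ lying in $[2]^{(m-1)^2}$, which is exactly the index set of a slice of $\mathbb{A}^2$.

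Next I would iterate. Since $\mathbb{A}^2$ is itself a nonnegative dimension-$2$ tensor of order at least $2$ possessing a zero in each slice, Lemma \ref{lem36} applies to it verbatim, with $\alpha_1,\alpha_2$ replaced by the zero positions furnished by (\ref{eq36}); this yields a zero in each slice of $(\mathbb{A}^2)^2=\mathbb{A}^4$. Using the associativity of the general product to identify $(\mathbb{A}^{2^s})^2$ with $\mathbb{A}^{2^{s+1}}$, an induction on $s$ then shows that for every $s\geq 0$ the tensor $\mathbb{A}^{2^s}$ has a zero entry in each of its two slices; in particular $\mathbb{A}^{2^s}$ is \emph{not} a positive tensor for any $s$.

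Finally I would close the argument. Suppose, for contradiction, that $\mathbb{A}$ is strongly primitive, and set $k=\eta(\mathbb{A})$. By Proposition \ref{prop211} we have $\mathbb{A}^t>0$ for every $t\geq k$. Choosing $s$ large enough that $2^s\geq k$ gives $\mathbb{A}^{2^s}>0$, contradicting the outcome of the induction. Hence $\mathbb{A}$ cannot be strongly primitive.

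The main obstacle is the inductive step, and it is a bookkeeping point rather than a computational one: one must verify that the output of Lemma \ref{lem36} is in exactly the right form to serve as its own input for the squared tensor. Concretely, this means checking that a slice index of $\mathbb{A}^2$ is an element of $[2]^{(m-1)^2}$ and that the two zero positions produced by (\ref{eq36}) are of this type, so that ``a zero in each slice'' is genuinely a fixed point of the squaring operation (Lemma \ref{lem36} conveniently imposes no distinctness or structural condition on $\alpha_1,\alpha_2$, which is what makes the iteration unconditional). Once this identification is secured, the power-of-two subsequence together with Proposition \ref{prop211}, which promotes positivity of a single power to positivity of all higher powers, turns the slice-wise invariant into the desired contradiction.
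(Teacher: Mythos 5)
Your proof is correct, but it takes a genuinely different route through the induction. The paper proves the stronger statement that \emph{every} power $\mathbb{A}^{r}$ ($r\geq 2$) has a zero in each slice, by induction on $r$: it writes $\mathbb{A}^{r}=\mathbb{A}\cdot\mathbb{A}^{r-1}$, keeps the original zero positions $\alpha_1,\alpha_2$ of $\mathbb{A}$ to steer the telescoping cancellation, and substitutes the inductively obtained zero positions $\delta_1,\delta_2$ of $\mathbb{A}^{r-1}$ into the definition of $\beta_t,\gamma_t$; this forces it to re-run the computation of Lemma \ref{lem36} with $(\mathbb{A}^{r-1})$-entries in the outer factors (glossed as ``the similar proof of Lemma \ref{lem36}''). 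You instead treat Lemma \ref{lem36} as a black box and apply it to $\mathbb{A}^{2^{s}}$ itself, which is legitimate: a slice index of $\mathbb{A}^{2}$ lies in $[2]^{(m-1)^{2}}=[2]^{m'-1}$ for the new order $m'=(m-1)^{2}+1$, the zero positions produced by (\ref{eq36}) are exactly of this type, and associativity of the general product identifies $(\mathbb{A}^{2^{s}})^{2}$ with $\mathbb{A}^{2^{s+1}}$. This spares you all recomputation but only reaches the powers $2^{s}$, so you genuinely need Proposition \ref{prop211} to promote ``$\mathbb{A}^{k}>0$ for $k=\eta(\mathbb{A})$'' to ``$\mathbb{A}^{t}>0$ for all $t\geq k$'' and hit a power of two; without that step the argument would not close, since strong primitivity only guarantees positivity of \emph{some} power, not necessarily one of the form $2^{s}$. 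Both routes are sound: the paper's yields the cleaner direct conclusion that no power of $\mathbb{A}$ is positive and avoids Proposition \ref{prop211} altogether, while yours minimizes index bookkeeping at the cost of one extra lemma and an argument by contradiction.
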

\begin{proof}
Now we show that there exists at least one zero element in each slice of $\mathbb{A}^{r}$ by induction on $r(\geq 2)$.

Firstly, by Lemma \ref{lem36}, we know there exists at least one zero element in each slice of $\mathbb{A}^{2}$.
Now we assume   that there exists at least one zero element in each slice of $\mathbb{A}^{r-1}$, say,
there exist $\delta_1, \delta_2\in [2]^{(m-1)^{r-1}}$ such that $(\mathbb{A}^{r-1})_{1\delta_1}=(\mathbb{A}^{r-1})_{2\delta_2}=0$.
For any $2\leq t \leq m$, let $\beta_{t}=\left\{\begin{array}{cc}
                                                                                      \delta_{2}, & \mbox{ if } j_{t}=1; \\
                                                                                       \delta_{1}, & \mbox{ if } j_{t}=2,
                                                                                     \end{array}\right.$
and $\gamma_{t}=\left\{\begin{array}{cc}
                                                                                      \delta_{2}, & \mbox{ if } k_{t}=1; \\
                                                                                       \delta_{1}, & \mbox{ if } k_{t}=2.
                                                                                     \end{array}\right.$
Then by (\ref{eq21}) and the similar proof  of Lemma \ref{lem36}, we have
  \begin{equation}\label{eq37}
(\mathbb{A}^{r})_{1\beta_{2}\ldots\beta_{m}}
=\sum\limits_{i_{2},i_{3},\ldots, i_{m}=1}^{n} a_{1i_{2}i_{3}\ldots i_{m}}
(\mathbb{A}^{r-1})_{i_{2}\beta_{2}}\ldots (\mathbb{A}^{r-1})_{i_{m}\beta_{m}}=0,
\end{equation}

\noindent and
\begin{equation}\label{eq38}
(\mathbb{A}^{r})_{2\gamma_{2}\ldots\gamma_{m}}
=\sum\limits_{i_{2},i_{3},\ldots, i_{m}=1}^{n} a_{2i_{2}i_{3}\ldots i_{m}}
(\mathbb{A}^{r-1})_{i_{2}\gamma_{2}}\ldots (\mathbb{A}^{r-1})_{i_{m}\gamma_{m}}=0.
\end{equation}

By (\ref{eq37}) and (\ref{eq38}), we  obtain there exists at least one zero element in each slice of $\mathbb{A}^{r}$,
and thus we  complete the proof.
\end{proof}

 Now we give the characterization of the strongly primitive tensor with order $m$ and dimension 2.

\begin{them}\label{thm38}
Let $\mathbb{A}=(a_{i_{1}i_{2}\ldots i_{m}})_{1\leq i_{j}\leq 2(j=1,\ldots, m)}$ be a nonnegative tensor with order $m$ and dimension $n=2$. Then

{\rm (1)} $\mathbb{A}$ is strongly primitive if and only if one of the following holds:

{\rm (a) } $\mathbb{A}=\mathbb{J}$;

{\rm (b) } $\mathbb{A}\not=\mathbb{J}$ and $a_{1i_{2}\ldots i_{m}}=a_{211\ldots 1}=1 (1\leq i_{j}\leq 2, j=2, \ldots,  m)$;

{\rm (c) }  $\mathbb{A}\not=\mathbb{J}$ and $a_{2i_{2}\ldots i_{m}}=a_{122\ldots 2}=1 (1\leq i_{j}\leq 2, j=2, \ldots,  m)$.

{\rm (2) }  If $\mathbb{A}$ is strongly primitive, then  $\eta(\mathbb{A})\leq 2$.
\end{them}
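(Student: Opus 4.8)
The plan is to prove both directions of part (1) together with the bound in part (2), exploiting Remark \ref{rem28}(4): strong primitivity and its degree depend only on which entries are nonzero, so the conditions (a)--(c), in which a ``$1$'' marks a nonzero entry and the unlisted entries are arbitrary, may be read purely as positivity conditions on the indicated entries. I would first dispatch the sufficiency of (1) and the bound in (2), since these follow from one short computation. If (a) holds then $\mathbb{A}=\mathbb{J}>0$, so $\eta(\mathbb{A})=1$. If (b) holds, I evaluate $(\mathbb{A}^2)_{i\alpha_2\ldots\alpha_m}$ by (\ref{eq21}) and keep only the single term with $i_2=\cdots=i_m=1$: for $i=1$ it equals $a_{1\ldots1}\,a_{1\alpha_2}\cdots a_{1\alpha_m}$ and for $i=2$ it equals $a_{21\ldots1}\,a_{1\alpha_2}\cdots a_{1\alpha_m}$, both positive because the entire first slice is positive and $a_{21\ldots1}>0$. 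Hence $\mathbb{A}^2>0$, so $\mathbb{A}$ is strongly primitive with $\eta(\mathbb{A})\le 2$; case (c) is identical after interchanging the symbols $1$ and $2$. Once necessity is in hand, this yields part (2), since every strongly primitive $\mathbb{A}$ then satisfies one of (a)--(c) and hence $\mathbb{A}^2>0$.

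For the necessity of (1), suppose $\mathbb{A}$ is strongly primitive. The contrapositive of Theorem \ref{thm37} states that one cannot have both a zero entry in the first slice and a zero entry in the second slice; equivalently, \emph{at least one slice is entirely positive}. The relabeling of $[2]$ that interchanges $1$ and $2$ is a tensor isomorphism, so it preserves strong primitivity and carries condition (b) to condition (c); thus I may assume the first slice is positive, i.e. $a_{1\alpha}>0$ for every $\alpha\in[2]^{m-1}$. If the second slice is also entirely positive then $\mathbb{A}=\mathbb{J}$ and (a) holds. Otherwise $\mathbb{A}\neq\mathbb{J}$, and the only remaining thing to prove is $a_{21\ldots1}>0$; this is the crux.

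To establish $a_{21\ldots1}>0$ I would argue by contradiction and track two distinguished entries of the powers. Write $P_k=(\mathbb{A}^k)_{1,1,\ldots,1}$ and $Q_k=(\mathbb{A}^k)_{2,1,\ldots,1}$ for the entries whose $1+(m-1)^k$ indices are all $1$, respectively a leading $2$ followed by all $1$'s; here $P_1=a_{1\ldots1}>0$ and $Q_1=a_{21\ldots1}$, which is assumed to be $0$. Specializing (\ref{eq21}) to the all-ones choice $\alpha_2=\cdots=\alpha_m=(1,\ldots,1)$, each factor $(\mathbb{A}^{k-1})_{i_j(1,\ldots,1)}$ equals $P_{k-1}$ when $i_j=1$ and $Q_{k-1}$ when $i_j=2$. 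I claim inductively that $P_k>0$ and $Q_k=0$ for all $k$: in the expansion of $Q_k$ every summand with some $i_j=2$ carries a factor $Q_{k-1}=0$, so only the term $i_2=\cdots=i_m=1$ survives, and it equals $a_{21\ldots1}P_{k-1}^{\,m-1}=Q_1P_{k-1}^{\,m-1}=0$; meanwhile that same all-ones term gives $P_k\ge a_{1\ldots1}P_{k-1}^{\,m-1}>0$. Thus $(\mathbb{A}^k)_{2,1,\ldots,1}=Q_k=0$ for every $k$, contradicting $\mathbb{A}^k>0$ for some $k$ (Definition \ref{defn111}). Hence $a_{21\ldots1}>0$, condition (b) holds, and the necessity is complete.

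The step I expect to be the main obstacle is this last one: isolating the right pair of ``propagating'' entries $P_k,Q_k$ and checking that the single vanishing entry $a_{21\ldots1}=0$ forces the all-ones column of every power to stay zero. The delicate points are the bookkeeping of the index lengths $(m-1)^k$ in $\mathbb{A}^k$ and the verification that no other summand can rescue $Q_k$ --- precisely where positivity of the first slice is used to keep $P_{k-1}>0$ while $Q_{k-1}=0$ annihilates the remaining terms. Everything else follows routinely from Theorem \ref{thm37} and (\ref{eq21}).
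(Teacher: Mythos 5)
Your proof is correct, and its skeleton matches the paper's: sufficiency comes from the one-term lower bound in (\ref{eq21}) (this is exactly Proposition \ref{prop27}, which the paper simply cites), and the ``both slices contain a zero'' situation is killed by Theorem \ref{thm37}. The one place you genuinely diverge is the crux you identified: when one slice is entirely positive but the distinguished entry ($a_{21\ldots 1}$, say) vanishes, the paper observes that $M(\mathbb{A})$ is then a triangular $2\times 2$ matrix, hence not primitive, and invokes Theorem \ref{thm32} (for $n=2$, $\mathbb{A}$ primitive iff $M(\mathbb{A})$ primitive) to conclude $\mathbb{A}$ is not even primitive. You instead run a self-contained induction on $P_k=(\mathbb{A}^k)_{1\ldots 1}$ and $Q_k=(\mathbb{A}^k)_{21\ldots 1}$, showing $Q_k=0$ for all $k$; this is in effect a direct re-derivation of $M(\mathbb{A}^k)_{21}=0$, i.e.\ of the special case of Theorem \ref{thm32} that is needed. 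Your route is more elementary and keeps the proof independent of Theorem \ref{thm32}; the paper's is shorter and makes the stronger point that in these cases $\mathbb{A}$ fails to be primitive at all, not merely strongly primitive. Also note that the paper's two cases here are organized as a contrapositive case analysis rather than your direct argument with a WLOG relabeling, but the content is the same; and your reading of the ``$=1$'' conditions as positivity conditions via Remark \ref{rem28}(4) is consistent with how the paper itself treats them.
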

\begin{proof}
Firstly, we show    the sufficient of (1). It is easy to see that  $\mathbb{A}=\mathbb{J}$ is strongly primitive with $\eta(\mathbb{J})=1$,
and if $\mathbb{A}$ satisfies (b) or (c),
$\mathbb{A}$ is strongly primitive with $\eta(\mathbb{J})=2$ by Proposition \ref{prop27} immediately.

Now we show the necessity of (1), that is, if $\mathbb{A}$ is not satisfied the conditions of (a), (b) or (c),
then we will show that $\mathbb{A}$ is not strongly primitive. We complete the proof by the following three cases.

\noindent {\bf Case 1: } $a_{1\alpha}=1$ for any $\alpha \in[2]^{m-1}$ and $a_{211\ldots 1}=0$.

It is not difficult to find that $M(\mathbb{A})=\left(\begin{array}{cc}
1 & 1\\
0 & *
\end{array}\right)$. Then $\mathbb{A}$ is not primitive  by Theorem \ref{thm32}, and thus  $\mathbb{A}$ is not strongly primitive.

\noindent {\bf Case 2: } $a_{2\alpha}=1$ for any $\alpha \in[2]^{m-1}$ and $a_{122\ldots 2}=0$.

Similarly, we can find that $M(\mathbb{A})=\left(\begin{array}{cc}
* & 0\\
1 & 1
\end{array}\right)$. Then $\mathbb{A}$ is not primitive  by Theorem \ref{thm32}, and thus  $\mathbb{A}$ is not strongly primitive.

\noindent {\bf Case 3: } There  is at least one zero element in each slice of $\mathbb{A}$.

Then there exist  $\alpha_{1}=j_{2}j_{3}\ldots j_{m}\in [2]^{m-1}$ and $\alpha_{2}=k_{2}k_{3}\ldots k_{m}\in [2]^{m-1}$
such that $a_{1\alpha_{1}}=a_{2\alpha_{2}}=0$.
Thus $\mathbb{A}$ is not strongly primitive by Theorem \ref{thm37}.

(2)  If $\mathbb{A}$ is strongly primitive, by Definition \ref{defn111} and the proof of (1), we obtain $\eta(\mathbb{A})\leq 2$ immediately.
\end{proof}

\begin{rem}
 By Theorem \ref{thm38}, we can see   that the strongly primitive degree $\eta(\mathbb{A})$ of an nonnegative
  tensor with order $m$ and dimension $n=2$ is irrelevant to its order $m$.
  \end{rem}

\section{Some properties and problems of order $m$  dimension $n(\geq 3)$ strongly primitive tensors }
\hskip.6cm In this section,
we will study some properties of the  strongly primitive tensors with order $m$ and dimension $n\geq 3$ and propose some questions for further research.

\begin{prop}\label{prop41}
Let $\mathbb{A}=(a_{i_{1}i_{2}\ldots i_{m}})_{1\leq i_{j}\leq n(j=1,\ldots, m)}$ be a nonnegative tensor with order $m$ and dimension $n$.
Let  $s\in[n]$,   $2\leq t\leq m$,   $j^{(s)}_{{t}} \in [n]$,
and   $\alpha_{s}=j^{(s)}_{{2}}j^{(s)}_{{3}}\ldots j^{(s)}_{{m}}$.
 If $a_{i\alpha_{s}}=0$ for  any $i\in[n]$ and any $s\in [n]\backslash \{i\}$,
then there exist  $\gamma_1, \gamma_2, \ldots, \gamma_n\in [n]^{(m-1)^2}$ such that
$(\mathbb{A}^{2})_{i\gamma_k}=0$ for any $i\in [n]$ and any $k\in [n]\backslash \{i\}$.
\end{prop}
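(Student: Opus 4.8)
The plan is to follow the same cancellation mechanism as in Lemma~\ref{lem36}, but to read off the correct ``target'' blocks for each slice directly from the given data $\alpha_{1},\ldots,\alpha_{n}$. Specializing (\ref{eq21}) to the case of two factors gives, for every $i\in[n]$ and all $\gamma^{(2)},\ldots,\gamma^{(m)}\in[n]^{m-1}$,
\begin{equation*}
(\mathbb{A}^{2})_{i\gamma^{(2)}\ldots\gamma^{(m)}}=\sum_{i_{2},i_{3},\ldots,i_{m}=1}^{n} a_{ii_{2}i_{3}\ldots i_{m}}\,a_{i_{2}\gamma^{(2)}}\ldots a_{i_{m}\gamma^{(m)}},
\end{equation*}
and every summand is nonnegative because $\mathbb{A}$ is nonnegative. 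The useful reformulation of the hypothesis is that for each $s\in[n]$ the multi-index $\alpha_{s}$ is a zero of every slice but the $s$-th; that is, $a_{i'\alpha_{s}}=0$ whenever $i'\neq s$.

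With this in hand I would, for each $k\in[n]$, define $\gamma_{k}\in[n]^{(m-1)^{2}}$ block-by-block by declaring its $t$-th block to be $\alpha_{j^{(k)}_{t}}$; that is, $\gamma_{k}=\gamma^{(2)}_{k}\ldots\gamma^{(m)}_{k}$ with $\gamma^{(t)}_{k}=\alpha_{j^{(k)}_{t}}$ for $2\le t\le m$, where $\alpha_{k}=j^{(k)}_{2}j^{(k)}_{3}\ldots j^{(k)}_{m}$. When $n=2$ this choice reproduces exactly the $\beta$- and $\gamma$-assignments of Lemma~\ref{lem36}, up to the relabeling forced by the two opposite conventions for the role of $\alpha_{s}$.

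Now fix $i\in[n]$ and any $k\in[n]\setminus\{i\}$, and expand $(\mathbb{A}^{2})_{i\gamma_{k}}$ by the displayed formula. In each summand the factor $a_{i_{t}\gamma^{(t)}_{k}}=a_{i_{t}\alpha_{j^{(k)}_{t}}}$ vanishes as soon as $i_{t}\neq j^{(k)}_{t}$, by the reformulated hypothesis applied with $s=j^{(k)}_{t}$. Since all summands are nonnegative, the only one that can survive is the one with $i_{t}=j^{(k)}_{t}$ for every $t$, i.e. with $(i_{2},\ldots,i_{m})=\alpha_{k}$; its leading factor is $a_{i\alpha_{k}}$, which is $0$ because $i\neq k$. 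Hence every summand vanishes and $(\mathbb{A}^{2})_{i\gamma_{k}}=0$, which is the desired conclusion. I do not anticipate a genuine obstacle once the blocks of $\gamma_{k}$ are chosen this way: the argument becomes a single ``support'' computation rather than the iterated slice-by-slice elimination of Lemma~\ref{lem36}, and it is automatically uniform in $i$, since the surviving-term analysis depends on $k$ only through $\alpha_{k}$ and never on the head index $i$. The one point requiring care is the two-layer multi-index bookkeeping---keeping the $m-1$ blocks of each $\gamma_{k}$, each itself a string in $[n]^{m-1}$, properly aligned with the factors of the product.
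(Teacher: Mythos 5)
Your proposal is correct and uses exactly the paper's construction: the blocks of $\gamma_k$ are chosen as $\alpha_{j^{(k)}_t}$ (the paper's $\beta^{(k)}_t$), each factor $a_{i_t\alpha_{j^{(k)}_t}}$ kills every summand with $i_t\neq j^{(k)}_t$, and the lone surviving term carries the factor $a_{i\alpha_k}=0$ for $k\neq i$. The only difference is presentational — you collapse the paper's iterated slice-by-slice elimination into a single support argument over the nonnegative sum — so this is essentially the same proof.
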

\begin{proof}
For each  $i \in [n]$, let  $\beta_t^{(k)}=\alpha_{j^{(k)}_t}\in [n]^{m-1}$ for any $k\in [n]$ and $2\leq t\leq m$,
then $a_{\overline{j^{(k)}_{t}}\beta^{(k)}_{t}}=0$ for any $\overline{j^{(k)}_{{t}}} \in [n]\backslash \{j^{(k)}_{{t}}\}$
by $a_{i\alpha_{s}}=0$ for  any $i\in[n]$ and any $s\in [n]\backslash \{i\}$.
Let $\gamma_k=\beta^{(k)}_{{2}}\beta^{(k)}_{{3}}\ldots\beta^{(k)}_{{m}}\in [n]^{(m-1)^2}$ for any $k\in [n]$.
Now we  show  $(\mathbb{A}^{2})_{i\gamma_k}=0$ for any $k\in [n]\backslash \{i\}$.

\hskip1.4cm $(\mathbb{A}^{2})_{i\gamma_k}$

\noindent $\xlongequal{\quad\quad\quad\quad} (\mathbb{A}^{2})_{i\beta^{(k)}_{2}\beta^{(k)}_{{3}}\ldots\beta^{(k)}_{{m}}}$

\noindent $\xlongequal{\quad\quad\quad\quad} \sum\limits_{i_{2},i_{3},\ldots, i_{m}=1}^{n} a_{ii_{2}i_{3}\ldots i_{m}}a_{i_{2}\beta^{(k)}_{2}}\ldots a_{i_{m}\beta^{(k)}_{m}} $

\noindent $\xlongequal{\quad\quad\quad\quad}  \sum\limits_{i_{3},\ldots, i_{m}=1}^{n} a_{i1i_{3}\ldots i_{m}}a_{1\beta^{(k)}_{2}}a_{i_{3}\beta^{(k)}_{3}}\ldots a_{i_{m}\beta^{(k)}_{m}}$

\noindent \hskip2.0cm $ +\ldots +\sum\limits_{i_{3},\ldots, i_{m}=1}^{n} a_{ini_{3}\ldots i_{m}}a_{n\beta^{(k)}_{2}}a_{i_{3}\beta^{(k)}_{3}}\ldots a_{i_{m}\beta^{(k)}_{m}}$

\noindent $\xlongequal{a_{\overline{j^{(k)}_{2}}\beta^{(k)}_{2}}=0} \sum\limits_{i_{3},\ldots, i_{m}=1}^{n} a_{ij^{(k)}_{2}i_{3}\ldots i_{m}}a_{j^{(k)}_{2}\beta^{(k)}_{2}} a_{i_{3}\beta^{(k)}_{3}}\ldots a_{i_{m}\beta^{(k)}_{m}}$

\noindent $\xlongequal{\quad\quad\quad\quad} \sum\limits_{i_{4},\ldots, i_{m}=1}^{n} a_{ij^{(k)}_{2}1i_4\ldots i_{m}}a_{j^{(k)}_{2}\beta^{(k)}_{2}}a_{1\beta^{(k)}_{3}}a_{i_{4}\beta^{(k)}_{4}}\ldots a_{i_{m}\beta^{(k)}_{m}} $

\noindent \quad\quad\quad\quad\quad $+ \ldots +\sum\limits_{i_{4},\ldots, i_{m}=1}^{n} a_{ij^{(k)}_{2}ni_4\ldots i_{m}}a_{j^{(k)}_{2}\beta^{(k)}_{2}}a_{n\beta^{(k)}_{3}}a_{i_{4}\beta^{(k)}_{4}}\ldots a_{i_{m}\beta^{(k)}_{m}}$

\noindent $\xlongequal{a_{\overline{j^{(k)}_{3}}\beta^{(k)}_{3}}=0}  \sum\limits_{i_{4},\ldots,i_{m}=1}^{n}  a_{ij^{(k)}_{2}j^{(k)}_{3}i_{4}\ldots i_{m}}a_{j^{(k)}_{2}\beta^{(k)}_{2}}a_{j^{(k)}_{3}\beta^{(k)}_{3}} a_{i_{4}\beta^{(k)}_{4}}\ldots a_{i_{m}\beta^{(k)}_{m}}$

\noindent $\xlongequal{\quad\quad\quad\quad}  \ldots$

\noindent $\xlongequal{\quad\quad\quad\quad} \sum\limits_{i_{m}=1}^{n}  a_{ij^{(k)}_{2}j^{(k)}_{3}\ldots j^{(k)}_{m-1}i_{m}}a_{j^{(k)}_{2}\beta^{(k)}_{2}}a_{j^{(k)}_{3}\beta^{(k)}_{3}}\ldots a_{j^{(k)}_{m-1}\beta^{(k)}_{m-1}}a_{i_{m}\beta^{(k)}_{m}}$

\noindent $\xlongequal{\quad\quad\quad\quad} a_{ij^{(k)}_{2}j^{(k)}_{3}\ldots j^{(k)}_{m-1}1}a_{j^{(k)}_{2}\beta^{(k)}_{2}}a_{j^{(k)}_{3}\beta^{(k)}_{3}}\ldots a_{j^{(k)}_{m-1}\beta^{(k)}_{m-1}}a_{1\beta^{(k)}_{m}} $

\noindent \quad\quad\quad\quad\quad $+ \ldots + a_{ij^{(k)}_{2}j^{(k)}_{3}\ldots j^{(k)}_{m-1}n}a_{j^{(k)}_{2}\beta^{(k)}_{2}}a_{j^{(k)}_{3}\beta^{(k)}_{3}}\ldots a_{j^{(k)}_{m-1}\beta^{(k)}_{m-1}}a_{n\beta^{(k)}_{m}}$

\noindent $\xlongequal{a_{\overline{j^{(k)}_{m}}\beta^{(k)}_{m}}=0}   a_{ij^{(k)}_{2}j^{(k)}_{3}\ldots j^{(k)}_{m-1}j^{(k)}_{m}}a_{j^{(k)}_{2}\beta^{(k)}_{2}}a_{j^{(k)}_{3}\beta^{(k)}_{3}}\ldots a_{j^{(k)}_{m-1}\beta^{(k)}_{m-1}}a_{j^{(k)}_{m}\beta^{(k)}_{m}}$

\noindent $\xlongequal{\quad\quad\quad\quad} a_{i\alpha_{k}}a_{j^{(k)}_{2}\beta^{(k)}_{2}}a_{j^{(k)}_{3}\beta^{(k)}_{3}}\ldots a_{j^{(k)}_{m-1}\beta^{(k)}_{m-1}}a_{j^{(k)}_{m}\beta^{(k)}_{m}}$

\noindent $\xlongequal{\quad a_{i\alpha_{k}}=0} 0.$

We note that $k\in [n] \backslash \{i \}$  which means there are $n-1$ zero elements in $i$-th slice of $\mathbb{A}^{2}$,
thus we complete the proof by $i \in [n]$.
\end{proof}

We note that Proposition \ref{prop41} is the generalization of Lemma \ref{lem36},
now we will obtain the  generalization of Theorem \ref{thm37}.

\begin{them}\label{thm42}
Let $\mathbb{A}=(a_{i_{1}i_{2}\ldots i_{m}})_{1\leq i_{j}\leq n(j=1,\ldots, m)}$ be a nonnegative tensor with order $m$ and dimension $n$.
Let  $ s\in[n]$,   $2\leq t\leq m$,   $j^{(s)}_{{t}} \in [n]$,
and   $\alpha_{s}=j^{(s)}_{{2}}j^{(s)}_{{3}}\ldots j^{(s)}_{{m}}$.
 If $a_{i\alpha_{s}}=0$ for  any $i\in[n]$ and any $s\in [n]\backslash \{i\}$,
then  $\mathbb{A}$ is not strongly primitive.
\end{them}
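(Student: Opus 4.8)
The plan is to establish the statement directly by showing that the prescribed zero pattern propagates to every power of $\mathbb{A}$. Concretely, I would prove by induction on $r\ge 1$ that there exist indices $\gamma_1^{(r)},\ldots,\gamma_n^{(r)}\in[n]^{(m-1)^r}$ with $(\mathbb{A}^r)_{i\gamma_k^{(r)}}=0$ for every $i\in[n]$ and every $k\in[n]\setminus\{i\}$; that is, $\mathbb{A}^r$ inherits exactly the same type of zero pattern assumed of $\mathbb{A}$. Since $n\ge 2$ makes $[n]\setminus\{i\}$ nonempty, this forces $\mathbb{A}^r$ to have a zero entry for every $r$, so no power of $\mathbb{A}$ is positive and $\mathbb{A}$ is not strongly primitive by Definition \ref{defn111}. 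This is the natural generalization of the argument in Theorem \ref{thm37}, with Proposition \ref{prop41} supplying the base step.

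The base cases are immediate: for $r=1$ the hypothesis of the theorem is exactly the required statement with $\gamma_k^{(1)}=\alpha_k$, and for $r=2$ it is precisely the conclusion of Proposition \ref{prop41}. For the inductive step, I would assume that $\mathbb{A}^{r-1}$ satisfies the property with indices $\delta_1,\ldots,\delta_n\in[n]^{(m-1)^{r-1}}$, so that $(\mathbb{A}^{r-1})_{i\delta_s}=0$ whenever $s\ne i$. For each $k\in[n]$ and each $2\le t\le m$ I set $\beta_t^{(k)}=\delta_{j_t^{(k)}}$ and let $\gamma_k^{(r)}=\beta_2^{(k)}\beta_3^{(k)}\cdots\beta_m^{(k)}\in[n]^{(m-1)^r}$. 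The whole point of this assignment is that whenever a summation index $i_t$ differs from $j_t^{(k)}$, the factor $(\mathbb{A}^{r-1})_{i_t\beta_t^{(k)}}=(\mathbb{A}^{r-1})_{i_t\delta_{j_t^{(k)}}}$ vanishes, because then $j_t^{(k)}\ne i_t$ and $\delta_{j_t^{(k)}}$ is a zero position of the $i_t$-th slice of $\mathbb{A}^{r-1}$.

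Expanding $(\mathbb{A}^r)_{i\gamma_k^{(r)}}$ by the product formula (\ref{eq21}) and telescoping over $i_2,\ldots,i_m$ exactly as in Proposition \ref{prop41}, but with each factor $a_{i_t\beta_t}$ replaced by $(\mathbb{A}^{r-1})_{i_t\beta_t^{(k)}}$, every summand in which some $i_t\ne j_t^{(k)}$ is killed by the vanishing just described. The unique surviving summand (all $i_t=j_t^{(k)}$) equals $a_{ij_2^{(k)}\cdots j_m^{(k)}}(\mathbb{A}^{r-1})_{j_2^{(k)}\beta_2^{(k)}}\cdots(\mathbb{A}^{r-1})_{j_m^{(k)}\beta_m^{(k)}}=a_{i\alpha_k}\cdot(\cdots)$, and for $k\ne i$ the hypothesis gives $a_{i\alpha_k}=0$. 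Hence $(\mathbb{A}^r)_{i\gamma_k^{(r)}}=0$ for all $k\ne i$, which closes the induction and yields the conclusion.

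The main obstacle is purely bookkeeping: carrying the telescoping collapse through all $m-1$ summation indices for the higher power $\mathbb{A}^{r-1}$ is notationally heavy. It is, however, a verbatim transcription of the computation already performed in Lemma \ref{lem36} and Proposition \ref{prop41}; the only genuinely new ingredient is the index assignment $\beta_t^{(k)}=\delta_{j_t^{(k)}}$, engineered so that the single surviving term reduces to the entry $a_{i\alpha_k}$ that the hypothesis annihilates. If one wishes to avoid reproducing the telescoping, an alternative is to iterate Proposition \ref{prop41} to conclude that $\mathbb{A}^{2^\ell}$ has a zero entry for every $\ell$; then, were $\mathbb{A}$ strongly primitive, Proposition \ref{prop211} would force $\mathbb{A}^{2^\ell}>0$ for all large $\ell$, a contradiction.
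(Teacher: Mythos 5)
Your proposal is correct and follows essentially the same route as the paper: induction on $r$ with Proposition \ref{prop41} as the base case, the index assignment $\beta_t^{(k)}=\delta_{j_t^{(k)}}$ (the paper's $\eta_t^{(s)}=\delta_{j_t^{(s)}}$), and the telescoping collapse to the single surviving term $a_{i\alpha_k}(\cdots)=0$. Your closing remark that one could instead iterate Proposition \ref{prop41} on the tensors $\mathbb{A}^{2^\ell}$ and invoke Proposition \ref{prop211} is a valid shortcut the paper does not take, but the main argument is the paper's verbatim.
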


\begin{proof}
Now we show that there exist $\varepsilon_1, \varepsilon_2, \ldots, \varepsilon_n\in [n]^{(m-1)^{r}}$ such that
$(\mathbb{A}^{r})_{i\varepsilon_k}=0$ for any $i\in [n]$ and any  $k\in [n]\backslash \{i\}$ by induction on $r(\geq 2)$, say,
there exist at least  $n-1$ zero elements in each slice of $\mathbb{A}^{r}$ and thus $\mathbb{A}$ is not strongly primitive.

Firstly, by Lemma \ref{prop41}, we know  there exist  $\gamma_1, \gamma_2, \ldots, \gamma_n\in [n]^{(m-1)^2}$ such that
$(\mathbb{A}^{2})_{i\gamma_k}=0$ for any $i\in [n]$ and any $k\in [n]\backslash \{i\}$, say,
there exist at least  $n-1$ zero elements in each slice of $\mathbb{A}^{2}$.
Now we assume   that there exist $\delta_1, \delta_2, \ldots, \delta_n\in [n]^{(m-1)^{r-1}}$ such that $(\mathbb{A}^{r-1})_{i\delta_k}=0$
for any $i\in[n]$ and any $k\in [n]\backslash \{i\}$, say, there exist at least  $n-1$ zero elements in each slice of $\mathbb{A}^{r-1}$.

Let  $\eta_t^{(s)}=\delta_{j^{(s)}_t}$ for any $s\in [n]$ and $2\leq t\leq m$,
then $(\mathbb{A}^{r-1})_{\overline{j^{(s)}_{t}}\eta^{(s)}_{t}}=0$ for any $\overline{j^{(s)}_{{t}}} \in [n]\backslash \{j^{(s)}_{{t}}\}$
by $(\mathbb{A}^{r-1})_{i\delta_{k}}=0$ for  any $i\in[n]$ and any $k\in [n]\backslash \{i\}$.
Let $\varepsilon_k=\eta^{(k)}_{{2}}\eta^{(k)}_{{3}}\ldots\eta^{(k)}_{{m}}\in [n]^{(m-1)^{r}}$ for any $k\in [n]$.
Now we  show  $(\mathbb{A}^{r})_{i\varepsilon_k}=0$ for any $i\in [n]$ and any  $k\in [n]\backslash \{i\}$.

By (\ref{eq21}) and the similar proof  of Proposition \ref{prop41}, we have
  \begin{equation*}
(\mathbb{A}^{r})_{i\varepsilon_k}=(\mathbb{A}^{r})_{i\eta^{(k)}_{{2}}\eta^{(k)}_{{3}}\ldots\eta^{(k)}_{{m}}}
=\sum\limits_{i_{2},i_{3},\ldots, i_{m}=1}^{n} a_{ii_{2}i_{3}\ldots i_{m}}
(\mathbb{A}^{r-1})_{i_{2}\eta^{(k)}_{{2}}}\ldots (\mathbb{A}^{r-1})_{i_{m}\eta^{(k)}_{{m}}}=0,
\end{equation*}
then we  complete the proof.
\end{proof}

\begin{prop}\label{prop43}
Let $\mathbb{A}=(a_{i_{1}i_{2}\cdots i_{m}})_{1\leq i_{j}\leq n(j=1,\cdots ,m)}$ be a nonnegative tensor with order $m$ and dimension $n$, $M(\mathbb{A})$ be the majorization matrix of $\mathbb{A}$. If there exist $i,j \in [n]$, such that $(M(\mathbb{A}))_{ij} > 0, (M(\mathbb{A}))_{uj} = 0$ for any $u \in [n]\backslash \{i\}$, and $(M(\mathbb{A}))_{ji} > 0, (M(\mathbb{A}))_{vi} = 0$ for any $v \in [n]\backslash\{j\}$. Then $\mathbb{A}$ is not   primitive, and thus $\mathbb{A}$ is not   strongly primitive.
\end{prop}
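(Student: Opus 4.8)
The plan is to use the majorization-matrix characterization of primitivity (Proposition \ref{pro18}) together with the observation that the hypothesis forces the two coordinate directions $e_i$ and $e_j$ to be permuted by $\mathbb{A}$, so that the orbit of $e_i$ can never spread out. First I would translate the hypothesis into a statement about how $\mathbb{A}$ acts on the standard basis vectors. Since $(\mathbb{A}e_\ell)_k=a_{k\ell\ell\ldots\ell}=(M(\mathbb{A}))_{k\ell}$ by Definition \ref{defn17}, the vector $\mathbb{A}e_\ell$ is precisely the $\ell$-th column of $M(\mathbb{A})$. The assumption $(M(\mathbb{A}))_{ji}>0$ together with $(M(\mathbb{A}))_{vi}=0$ for all $v\neq j$ says that the $i$-th column of $M(\mathbb{A})$ is supported only on row $j$, i.e. $\mathbb{A}e_i=c_1e_j$ with $c_1=(M(\mathbb{A}))_{ji}>0$; symmetrically the second hypothesis gives $\mathbb{A}e_j=c_2e_i$ with $c_2=(M(\mathbb{A}))_{ij}>0$.

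Next I would compute the $i$-th column of $M(\mathbb{A}^r)$ by induction on $r$ using (\ref{eq21}). Specializing $\alpha_2=\cdots=\alpha_m=ll\ldots l$ in (\ref{eq21}) shows that the $l$-th column of $M(\mathbb{A}^r)$ equals $\mathbb{A}\big(\text{$l$-th column of }M(\mathbb{A}^{r-1})\big)$ (equivalently, in the language of Definition \ref{defn11}, $T_{\mathbb{A}}^r(e_l)$ has the same zero/nonzero pattern as $\mathbb{A}^r e_l$). Starting from the base step above and using that applying $\mathbb{A}$ to a positive multiple of $e_i$ returns a positive multiple of $e_j$ (by the degree-$(m-1)$ homogeneity $\mathbb{A}(cx)=c^{m-1}\mathbb{A}(x)$, which preserves supports for $c>0$), an easy induction gives that the $i$-th column of $M(\mathbb{A}^r)$ is a positive multiple of $e_j$ for odd $r$ and of $e_i$ for even $r$. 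In either case it is supported on a single coordinate, so since $n\geq 2$ it has a zero entry. Hence $M(\mathbb{A}^r)$ is never positive, and by Proposition \ref{pro18} $\mathbb{A}$ is not primitive; as strong primitivity implies primitivity, $\mathbb{A}$ is not strongly primitive either.

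The argument is elementary once the orbit structure $e_i\leftrightarrow e_j$ is recognized, so I do not expect a deep obstacle; the only point requiring care is the inductive bookkeeping—checking, via (\ref{eq21}) and the homogeneity of the tensor action, that $\mathbb{A}$ sends a positive multiple of $e_i$ to a positive multiple of $e_j$ and conversely, so that the $i$-th column of $M(\mathbb{A}^r)$ stays confined to a single coordinate at every power. (Should the degenerate reading $i=j$ be permitted, the same computation gives $\mathbb{A}e_i=c\,e_i$, and the $i$-th column of $M(\mathbb{A}^r)$ is always supported on $\{i\}$ alone, so the conclusion is unchanged.)
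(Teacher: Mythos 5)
Your proof is correct and is essentially the paper's argument: both establish by induction, via equation (\ref{eq21}) specialized to indices of the form $ll\ldots l$, that the $i$-th column of $M(\mathbb{A}^r)$ stays supported on the single row $j$ for odd $r$ and on row $i$ for even $r$, and then invoke Proposition \ref{pro18}. Your packaging of the induction step as ``the $l$-th column of $M(\mathbb{A}^r)$ is $\mathbb{A}$ applied to the $l$-th column of $M(\mathbb{A}^{r-1})$, and $\mathbb{A}$ swaps positive multiples of $e_i$ and $e_j$'' is just a cleaner restatement of the entrywise computation the paper carries out.
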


\begin{proof}
Firstly, we show the following assert:\vskip.15cm

{\bf  If $k$ is odd, then $(M(\mathbb{A}^{k}))_{ij}>0,$ $(M(\mathbb{A}^{k}))_{ji}>0,$
 $(M(\mathbb{A}^{k}))_{uj}=0 \mbox{ for any } u \in [n]\backslash\{i\},$
$(M(\mathbb{A}^{k}))_{vi}=0  \mbox{ for any } v \in [n]\backslash \{j\}$.

 If $k$ is even, then $(M(\mathbb{A}^{k}))_{ii}>0,$ $(M(\mathbb{A}^{k}))_{jj}>0,$
 $(M(\mathbb{A}^{k}))_{ui}=0 \mbox{ for any } u \in [n]\backslash\{i\},$
$(M(\mathbb{A}^{k}))_{vj}=0  \mbox{ for any } v \in [n]\backslash \{j\}$.} \vskip.15cm

When $k=1$, the above result holds is obvious. When $k=2$,
by Definition \ref{defn17} and (\ref{eq21}), we have

 $(M(\mathbb{A}^{2}))_{ii} =(\mathbb{A}^{2})_{ii\ldots i}$

\hskip1.9cm $=\sum\limits_{i_{2},i_{3},\ldots ,i_{m}=1}^{n} a_{ii_{2}i_{3}\ldots i_{m}}a_{i_{2}i\ldots i}\ldots a_{i_{m}i\ldots i}$

\hskip1.9cm $= a_{ij\ldots j}(a_{ji\ldots i})^{m-1} $

\hskip1.9cm $=(M(\mathbb{A}))_{ij}((M(\mathbb{A}))_{ji})^{m-1}$

\hskip1.9cm $>0$,

\noindent and for any $u \in [n]\backslash \{i\}$, we have

  $(M(\mathbb{A}^{2}))_{ui} =(\mathbb{A}^{2})_{ui\ldots i}$

 \hskip1.9cm $=\sum\limits_{i_{2},i_{3},\ldots ,i_{m}=1}^{n} a_{ui_{2}i_{3}\ldots i_{m}}a_{i_{2}i\ldots i}\ldots a_{i_{m}i\ldots i}$

 \hskip1.9cm $=a_{uj\ldots j}(a_{ji\ldots i})^{m-1} $

\hskip1.9cm $=(M(\mathbb{A}))_{uj}((M(\mathbb{A}))_{ji})^{m-1}$

\hskip1.9cm $ = 0.$

Similarly,  we can show $(M(\mathbb{A}^{2}))_{jj} > 0$ and $(M(\mathbb{A}^{2}))_{vj}=0$ for any $v \in [n]\backslash \{j\}$.

Now we assume that for any $k$, the above assert holds. Then for $k+1$, we consider the following two cases.

\noindent {\bf Case 1: } $k$ is odd.

Then by (\ref{eq21}), we have

$(M(\mathbb{A}^{k+1}))_{ii}=(\mathbb{A}^{k+1})_{ii\ldots i}$

\hskip2.3cm $=\sum\limits_{i_{2},i_{3},\ldots ,i_{m}=1}^{n} a_{ii_{2}i_{3}\ldots i_{m}}(\mathbb{A}^{k})_{i_{2}i\ldots i}\ldots (\mathbb{A}^{k})_{i_{m}i\ldots i}$

\hskip2.3cm $=a_{ij\ldots j}((\mathbb{A}^{k})_{ji\ldots i})^{m-1}$

\hskip2.3cm $=(M(\mathbb{A}))_{ij}((M(\mathbb{A}^{k}))_{ji})^{m-1}$

\hskip2.3cm $>0,$

\noindent and for any $u \in [n]\backslash \{i\}$, we have

$(M(\mathbb{A}^{k+1}))_{ui}=(\mathbb{A}^{k+1})_{ui\ldots i}$

\hskip2.4cm $=\sum\limits_{i_{2},i_{3},\ldots ,i_{m}=1}^{n} a_{ui_{2}i_{3}\ldots i_{m}}(\mathbb{A}^{k})_{i_{2}i\ldots i}\ldots (\mathbb{A}^{k})_{i_{m}i\ldots i}$

\hskip2.4cm $=a_{uj\ldots j}((\mathbb{A}^{k})_{ji\ldots i})^{m-1}$

\hskip2.4cm $=(M(\mathbb{A}))_{uj}((M(\mathbb{A}^{k}))_{ji})^{m-1}$

\hskip2.4cm $=0.$

Similarly,  we can show $(M(\mathbb{A}^{k+1}))_{jj} > 0$ and $(M(\mathbb{A}^{k+1}))_{vj}=0$ for any $v \in [n]\backslash \{j\}$.

\noindent {\bf Case 2: } $k$ is even.

By (\ref{eq21}) and the similar proof of Case 1, we can show $ (M(\mathbb{A}^{k+1}))_{ij}>0$,
 $(M(\mathbb{A}^{k+1}))_{ji}>0,$
 $(M(\mathbb{A}^{k+1}))_{uj}=0 \mbox{ for any } u \in [n]\backslash\{i\},$
and $(M(\mathbb{A}^{k+1}))_{vi}=0  \mbox{ for any } v \in [n]\backslash \{j\}$.

By Proposition \ref{pro18} and the above assert, we know $\mathbb{A}$ is not   primitive, and thus $\mathbb{A}$ is not   strongly primitive.
\end{proof}

Let $\mathbb{A}=(a_{i_{1}i_{2}\cdots i_{m}})_{1\leq i_{j}\leq n(j=1,\cdots ,m)}$ be a nonnegative  strongly primitive tensor with order $m$ and dimension $n$.
When $n=2$,   we know $\eta(\mathbb{A})\leq 2$ by Theorem \ref{thm38}.
When $n\geq 3$, we donot know the value  or bound of $\eta(\mathbb{A})$.
Even $n=3$, we donot find out all strongly primitive tensors.
Thus we think it is not easy to obtain the value  or bound of $\eta(\mathbb{A})$.
Based on the  computation of   the case   $n=3$,  we propose the following problem for further research.

\begin{prob}
Let $n\geq 3$, $\mathbb{A}=(a_{i_{1}i_{2}\cdots i_{m}})_{1\leq i_{j}\leq n(j=1,\cdots ,m)}$ be a nonnegative  strongly primitive tensor with order $m$ and dimension $n$. Then $\eta(\mathbb{A})< (n-1)^2+1. $
\end{prob}

In \cite{2015H, 2014Y}, the authors gave some  algebraic characterizations of a nonnegative  primitive tensor,
and in  \cite{2015C}, the authors showed that a nonnegative tensor is primitive if and only if the greatest common divisor of all the
cycles in the associated directed hypergraph is equal to 1. It is natural for us to consider the following.
\begin{prob}
Study the algebraic or graphic characterization of a nonnegative strongly primitive tensor.
 \end{prob}

 We are sure the above two questions are interesting and not easy.



\begin{thebibliography}{}
\bibitem{2008C} K.C. Chang, K. Pearson, and T. Zhang, Perron-Frobenius theorem for nonnegative tensors, Commun. Math. Sci. 6(2008), 507--520.
\bibitem{2011C} K.C. Chang, K. Pearson, and T. Zhang, Primitivity, the convergence of the NQZ method, and the largest eigenvalue tensors, SIAM J. Matrix Anal. Appl. 32(2011), 806--819.
\bibitem{2015C} L.B. Cui, W. Li, M.K. Ng, Primitive tensors and directed hypergraphs, Linear Algebra Appl. 471(2015), 96--108.
\bibitem{2015H} Z.L. He, P.Z. Yuan and L.H. You, On the exponent set of nonnegative  primitive tensors, Linear Algebra Appl. 465(2015), 376--390.
\bibitem{2005L} L.H. Lim, Singular values and eigenvalues of tensors, a rational approach, in proceedings 1st IEEE international workshop on computational advances of adaptive processing (2005), 129--132.
 \bibitem{2010P} K. Pearson, Essentially positive tensors, Int. J. Algebra 4(2010),421--427.
\bibitem{2005Q} L. Qi, Eigenvalues of a real supersymmetric tensor, Symbolic Comput. 40(2005), 1302--1324.
\bibitem{2013S} J.Y. Shao, A general product of tensors with applications, Linear Algebra Appl. 439(2013), 2350--2366.
\bibitem{2013S2} J.Y. Shao, H.Y. Shan, L. Zhang, On some properties of the determinants of tensors, Linear Algebra Appl. 439 (2013), 3057--3069.
\bibitem{2014Y} P.Z. Yuan, Z.L. He, and L.H. You, A conjecture on the primitive degree  of tensors, Linear Algebra Appl. 450(2014), 175--185.
\bibitem{2015Y}P.Z. Yuan, Z.L. He, L.H. You, Further results and some open problems on the primitive degree of nonnegative tensors, Linear Algebra Appl. 480 (2015), 72--92.

\end{thebibliography}
\end{document}